\DeclareMathOperator{\Iso}{Iso}
\DeclareMathOperator{\PSL}{PSL}
\DeclareMathOperator{\PGL}{PGL}
\DeclareMathOperator{\Hom}{Hom}
\DeclareMathOperator{\Aut}{Aut}
\DeclareMathOperator{\Inn}{Inn}
\DeclareMathOperator{\Out}{Out}
\DeclareMathOperator{\Mod}{Mod}
\DeclareMathOperator{\id}{id}
\begin{document}

\title{Isotopic tiling theory for hyperbolic surfaces\thanks{This research was funded by the Emmy Noether Programme of the Deutsche Forschungsgemeinschaft. B.K was supported by the Deutscher Akademischer Austauschdienst for a research stay at the Australian National University.}
}


\author{Benedikt Kolbe         \and
        Myfanwy E. Evans 
}


\institute{Benedikt Kolbe \at
               Technische Universit\"{a}t Berlin\\
               Stra\ss e des 17. Juni 136\\
				10623 Berlin\\
				\email{benedikt.kolbe@inria.fr}
           \and
           Myfanwy E. Evans \at
              Universit\"{a}t Potsdam\\
                Karl-Liebknecht-Str. 24-25\\
		14476 Potsdam-Golm\\
	\email{evans@uni-potsdam.de}
}

\maketitle

\begin{abstract}
In this paper, we develop the mathematical tools needed to explore isotopy classes of tilings on hyperbolic surfaces of finite genus, possibly nonorientable, with boundary, and punctured. More specifically, we generalize results on Delaney-Dress combinatorial tiling theory using an extension of mapping class groups to orbifolds, in turn using this to study tilings of covering spaces of orbifolds. Moreover, we study finite subgroups of these mapping class groups. Our results can be used to extend the Delaney-Dress combinatorial encoding of a tiling to yield a finite symbol encoding the complexity of an isotopy class of tilings. The results of this paper provide the basis for a complete and unambiguous enumeration of isotopically distinct tilings of hyperbolic surfaces.
\keywords{Isotopic tiling theory \and Delaney-Dress tiling theory \and Mapping class groups \and Orbifolds \and Maps on surfaces \and Hyperbolic tilings}
\subclass{05B45 \and 05C30 \and 52C20 \and 57M07}
\end{abstract}

\section{Introduction}
The enumerative approaches of Delaney-Dress tiling theory~\cite{DRESS1987} in the two-dimensional hyperbolic plane have facilitated a novel investigation of three-dimensional Euclidean networks, where hyperbolic tilings of triply-periodic minimal surfaces (TPMS) are used for an enumeration of crystallographic nets in $\mathbb{R}^3$~\cite{Sadoc1989,Nesper2001,Robins2005,Robins2006,Ramsden2009,Castle2012}. By relating in-surface symmetries of the TPMS to ambient Euclidean symmetries~\cite{Perez2002,Hyde2014}, the problem of graph enumeration and characterisation in $\mathbb{R}^3$ is transformed to a two-dimensional problem in equivariant tiling theory. The idea is that tilings of the hyperbolic plane can be reticulated over the surface to give a Euclidean geometry to the tile boundaries. This idea has been explored in several contexts over the past $30$ years, including standard hyperbolic tilings by disk-like tiles with kaleidoscopic symmetry~\cite{Ramsden2003,Ramsden2009}, infinite tiles with network-like boundaries~\cite{Hyde2000,hyderams,evansper1,evansper3,Kirkensgaard2014}, and infinite tiles with geodesic boundaries~\cite{evansper2}. 
Chemically, the approach is motivated by the confluence of minimal surface geometry and the structural chemistry of zeolites and metal-organic frameworks~\cite{Hyde1991,Hyde1993,Andersson1984,Chen2001}. In particular, this approach has led to new insights into the structural properties of chemical frameworks in $\mathbb{R}^3$~\cite{Hyde1994}. 

The enumeration of hyperbolic tilings with a given symmetry group reduces down to a problem of enumerating all embeddings of graphs on the orbifold associated to the symmetry group of a tiling, as well as a suitable notion of equivalence among different tilings. 
Delaney-Dress tiling theory provides a systematic approach to the complete enumeration of combinatorial equivalence classes of tilings in simply connected spaces. Computer implementations of algorithms based on Delaney-Dress tiling theory can exhaustively enumerate the combinatorial types of equivariant tilings in simply connected spaces of constant sectional curvature~\cite{Huson1993}. This gives us a description of all combinatorially distinct tilings of an orbifold. For our purposes, we require an understanding of the distinct ways in which this combinatorial structure can be embedded on the orbifold, which in turn represent isotopically distinct tilings of the hyperbolic plane. For example, the Stellate orbifolds 2223 and 2224 can be decorated by a simple combinatorial structure consisting of a single edge. However, this simple structure can manifest as an infinite set of isotopically distinct embedded hyperbolic tilings~\cite{evansper1,evansper2,Pedersen2017,Pedersen2018}.

The classification of embedded combinatorial structures is precisely what this paper will address. We will generalize Delaney-Dress combinatorial tiling theory to classify all isotopically distinct equivariant tilings of any hyperbolic surface of finite genus, possibly nonorientable, with boundary, and punctured. By a hyperbolic surface, we always mean a complete finite-area Riemannian surface with constant sectional curvature $-1$ and totally geodesic boundary. We consider here the $2$-dimensional case, however, the related classifications for higher dimensional hyperbolic orbifolds is also briefly discussed. Our approach is constructive and therefore allows, in theory, a complete enumeration of such classes of tilings. 

Since many of the results we derive here are motivated by the EPINET database (Euclidean patterns in non-Euclidean tilings)~\cite{epinet}, we briefly explain the idea behind the enumerative project. In essence, EPINET enumerates symmetric embeddings of graphs into hyperbolic surfaces. The goal is to enumerate symmetric periodic graphs in $\mathbb{R}^3$ by embedding the underlying hyperbolic surfaces into $\mathbb{R}^3$ in a periodic way and such that the symmetries of the surface and hence the graph embedding correspond to symmetries in $\mathbb{R}^3$. For this, triply-periodic minimal surfaces are used. The graphs considered lift to tilings of the universal covering space of the finite topology surface that embeds in the three-torus to produce the triply-periodic minimal surface, which is the hyperbolic plane $\mathbb{H}^2$. The enumerative process then works in the reverse direction, where finding a finite symbol encoding different tilings of $\mathbb{H}^2$ allows the subsequent enumeration of periodic graphs in $\mathbb{R}^3$. Figure \ref{fig:epinet} gives some examples of the correspondence between the hyperbolic tilings and the tilings of the Gyroid minimal surface in $\mathbb{R}^3$.

\begin{figure}[!tbp]
 \begin{subfigure}[t]{0.3\textwidth}
  \centering
    \includegraphics[width=0.7\textwidth]{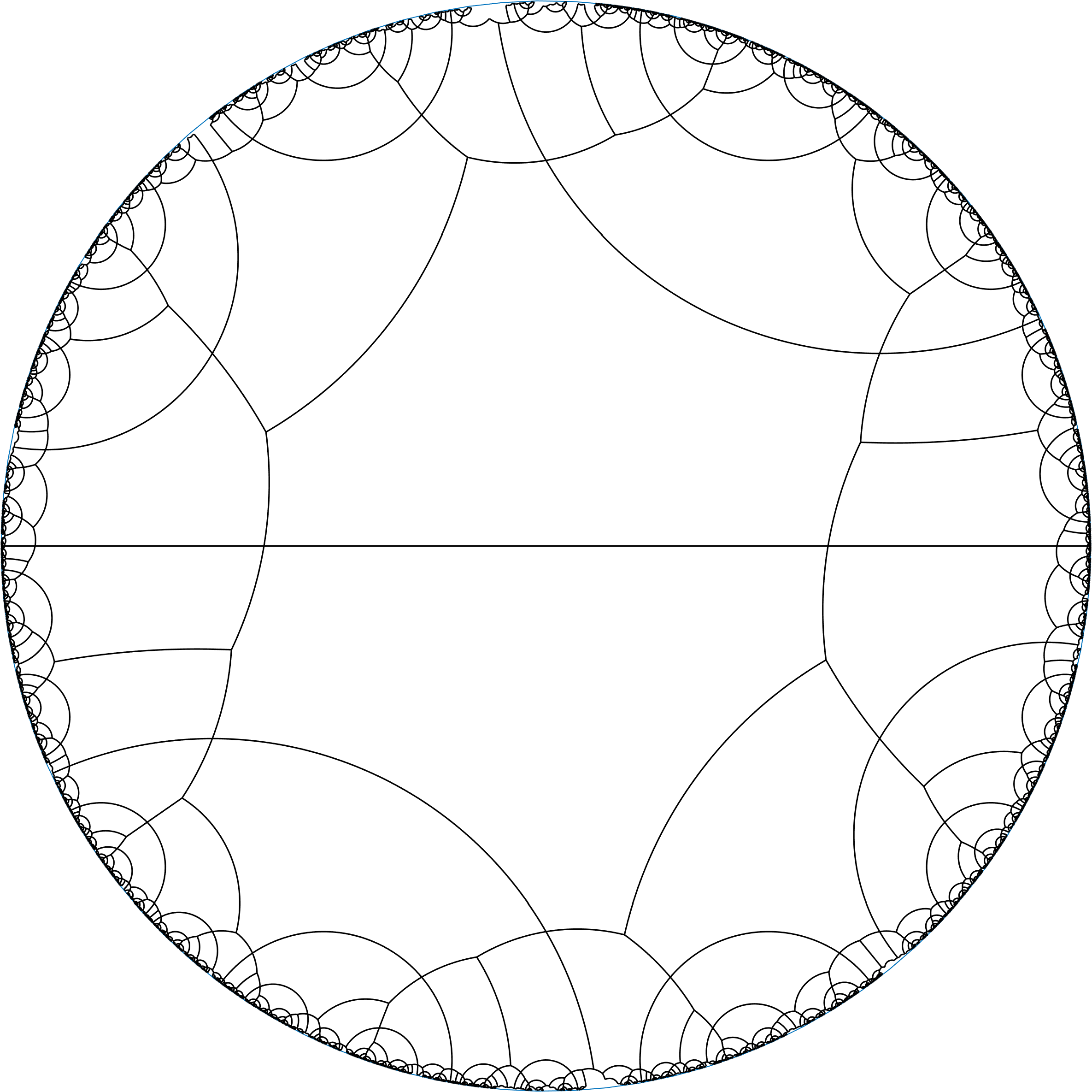}
  \end{subfigure}
  \hfill
  \begin{subfigure}[t]{0.3\textwidth}
  \centering
    \includegraphics[width=0.7\textwidth]{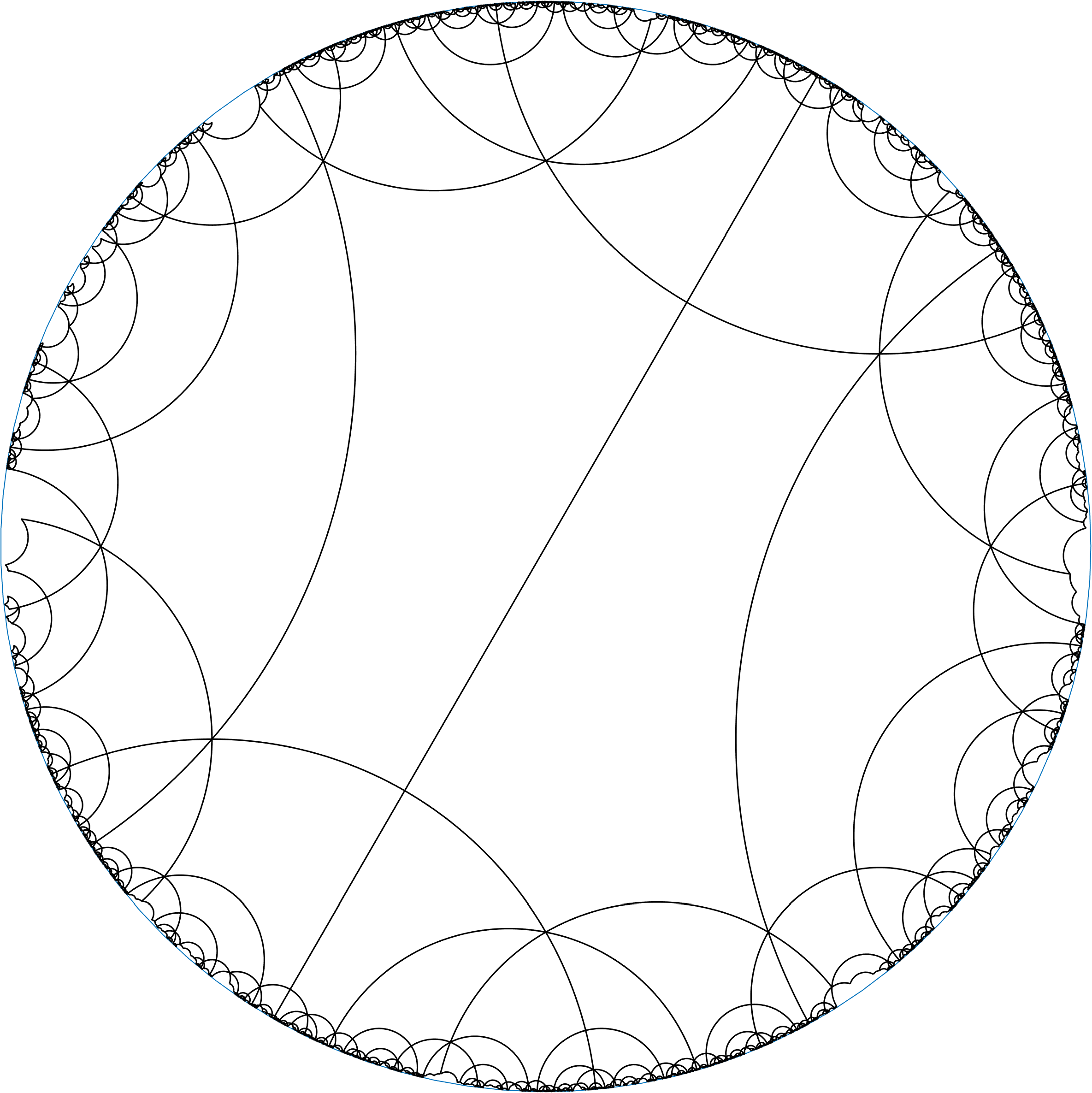}
  \end{subfigure}
  \hfill
    \begin{subfigure}[t]{0.3\textwidth}
  \centering
    \includegraphics[width=0.7\textwidth]{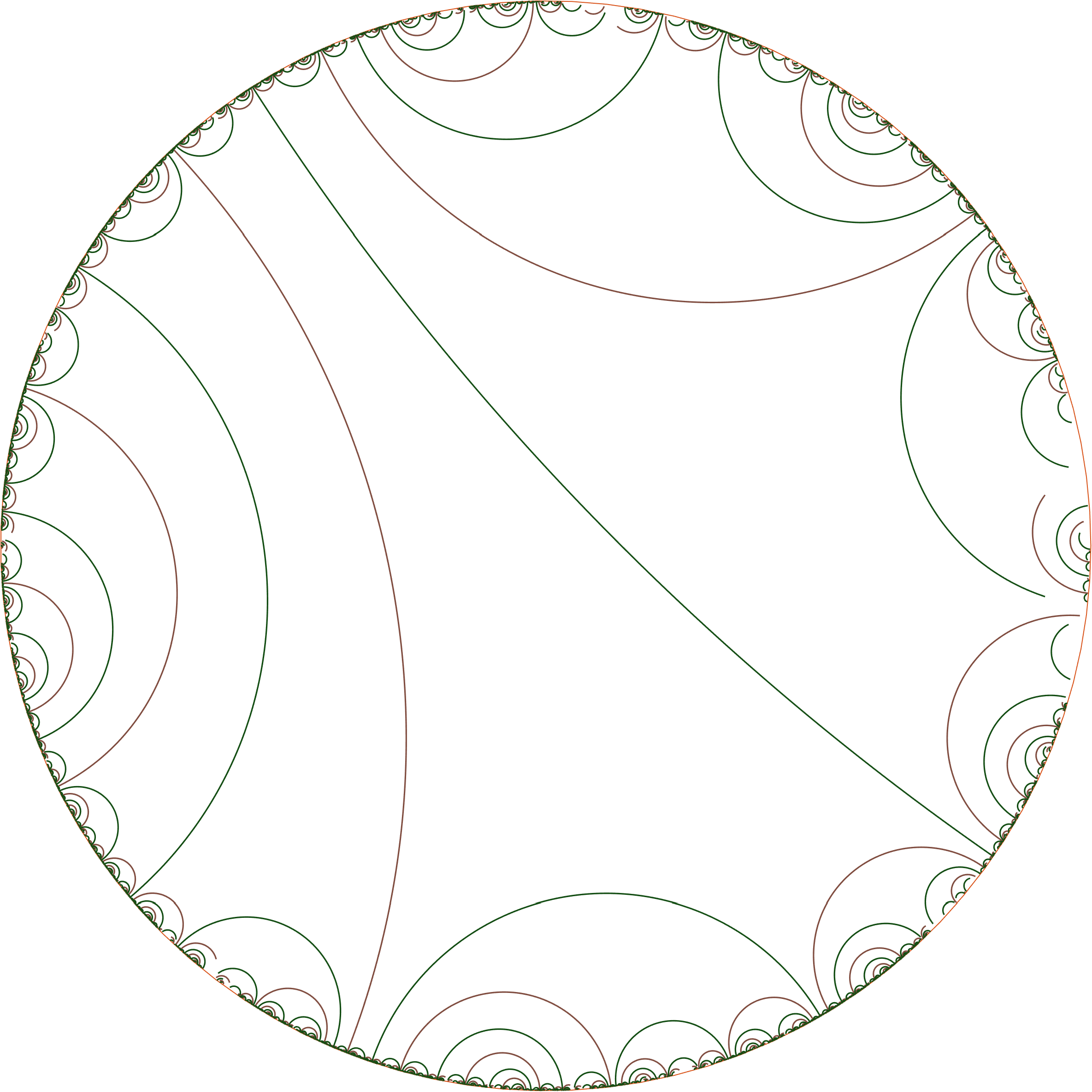}
  \end{subfigure}
  \\
  \begin{subfigure}[t]{0.3\textwidth}
  \centering
    \includegraphics[width=1.1\textwidth]{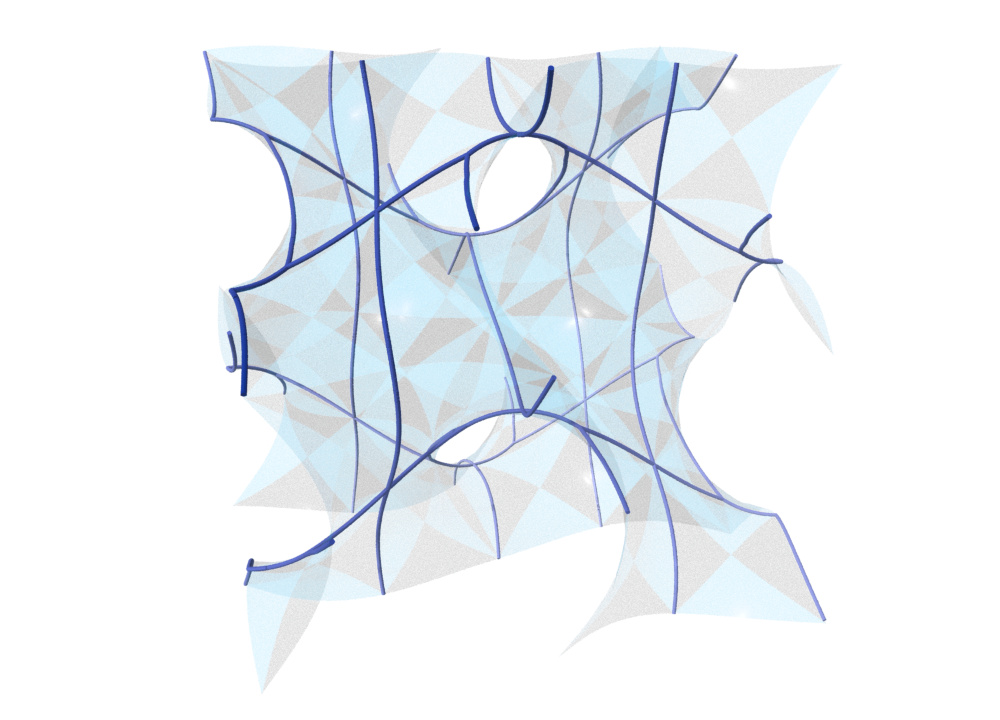}
  \end{subfigure}
  \hfill
  \begin{subfigure}[t]{0.3\textwidth}
  \centering
    \includegraphics[width=1.1\textwidth]{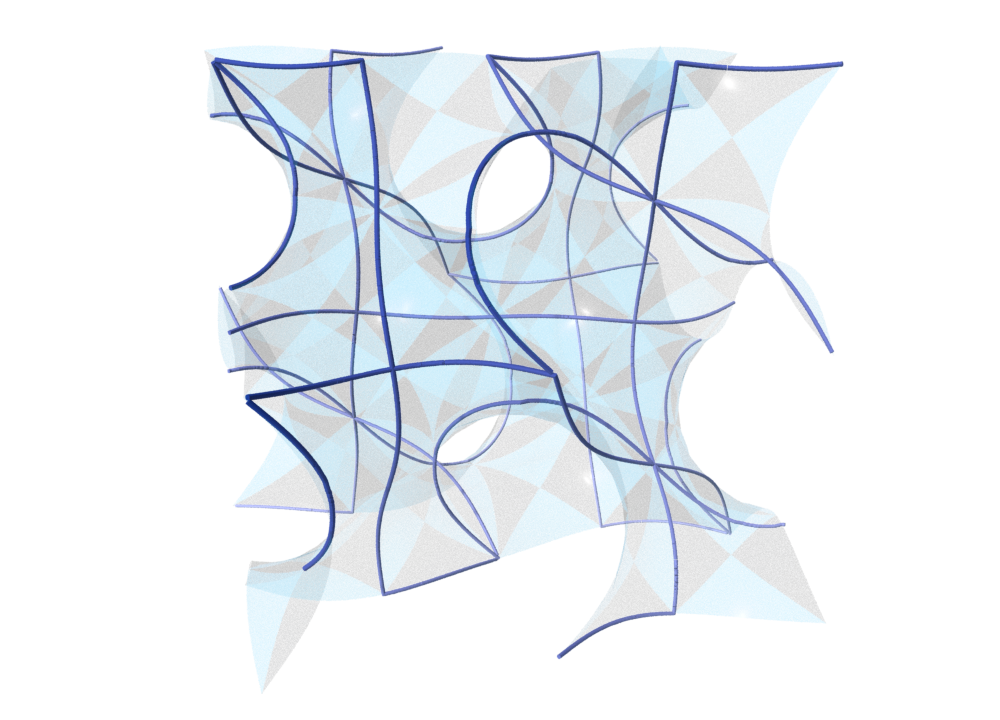}
  \end{subfigure}
  \hfill
    \begin{subfigure}[t]{0.3\textwidth}
  \centering
    \includegraphics[width=1.1\textwidth]{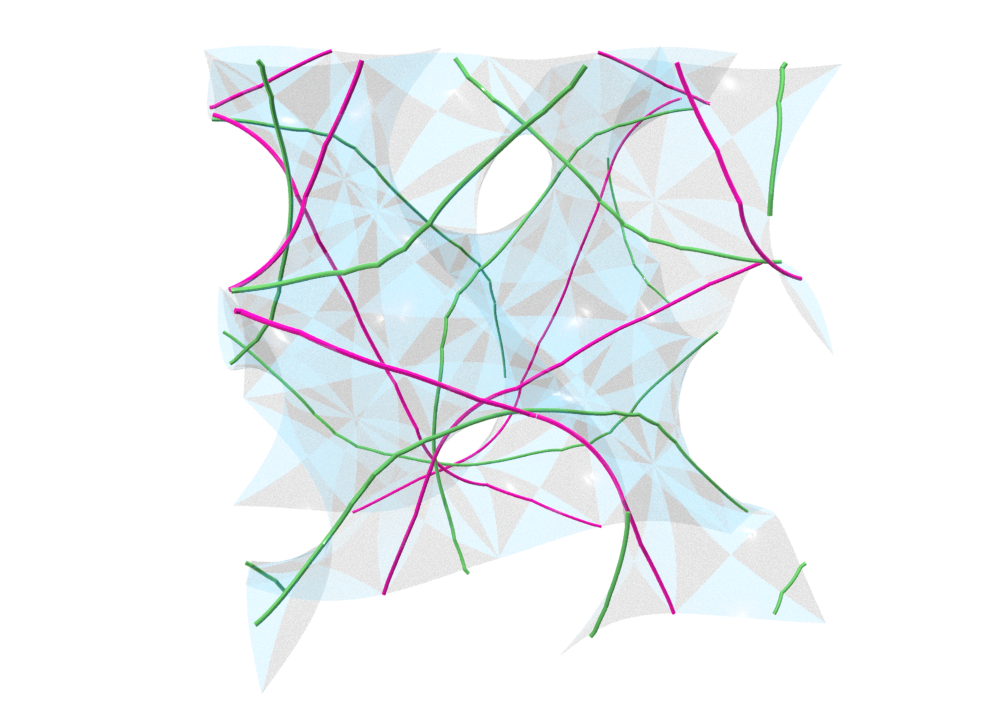}
  \end{subfigure}
\caption{On the top row are shown three tilings of $\mathbb{H}^2$. The black lines show the tile boundaries, each enclosing a tile. These tilings have symmetry 22222, using Conway's orbifold notation. The bottom row shows the edge graphs of the tilings projected onto the Gyroid triply-periodic minimal surface, where one periodic unit cell is shown. The Gyroid is triangulated symmetrically in a way that respects the symmetry group of the surface. Each tiling has the same abstract symmetry group in $\mathbb{H}^2$, and corresponding symmetries in $\mathbb{R}^3$.}\label{fig:epinet}
\end{figure}

Throughout this paper, we make heavy use of the notion of orbifolds~\cite{thurston} and mapping class groups~\cite{primermcgs}. The connection of isotopic tiling theory and mapping class groups is novel, however, there is a well-known connection between the Teichm\" uller space of Riemann surfaces of genus $g$ and certain tilings of the hyperbolic plane with $4g$ sided geodesical polygons that we will use as inspiration~\cite{primermcgs}. We will derive some algorithms to enumerate all equivariant tilings on a hyperbolic Riemann surface in its uniformized metric. Note that this also produces tilings for other Riemannian surfaces by uniformizing the metric within its conformal equivalence class. Indeed, it is well-known that any isometry group of a Riemannian surface gives rise to a unique isometry group of the uniformized surface.

This paper is structured into six sections which cumulatively build the connection between isotopic tiling theory and mapping class groups. We clarify several smaller questions along the way, building intuition of previous results in a new context. We begin with section \ref{sec:orbs} (Symmetry Groups of Tilings and Orbifolds), where we recapitulate the notion of two-dimensional developable orbifolds and expand the framework to incorporate more general classes of orbifolds with punctures and boundary. This is followed by section \ref{sec:ddtheory} (Isotopic Tiling Theory) where we generalize combinatorial Delaney-Dress tiling theory to encode isotopically distinct tilings of surfaces in terms of generators of the symmetry group. In the brief section \ref{sec:outs} (Outer Automorphisms) we will elucidate the connection between outer automorphisms and the generators that encode isotopically distinct tilings. Then, having laid all the groundwork, we will introduce the mapping class group (MCG) of orbifolds in section \ref{sec:MCGorbifold} and prove fundamental results facilitating its applications to tiling theory. In section \ref{sec:bhtheory}, we establish relations between the spaces of tilings of covering spaces and lastly, in section \ref{sec:finiteorders}, we highlight how some of the algebraic properties of MCGs relate to the isotopy classes of tilings they correspond to. In particular, we prove the Nielsen realization theorem in the case of orbifolds and establish the importance of finite subgroups of MCGs for isotopy classes of tilings.
 
This paper represents the theoretical foundation for an enumeration of isotopy classes of tilings on surfaces. The implementation of these results will appear elsewhere and are of inherent interest in the natural sciences. As a result, we make an effort to make the results more accessible by explaining the intuition behind the main ideas.

\section{Symmetry Groups of Tilings and Orbifolds}\label{sec:orbs}
We begin with orbifolds~\cite{Conway2002,thurston,adem2002orbifolds}. Let $\mathcal{X}$ be a simply connected Riemannian manifold $\mathcal{X}$ with constant sectional curvature.  We only work with developable orbifolds, which means that the orbifold $\mathcal{O}$ is the topologically the quotient space $\mathcal{X}/\Gamma$, where $\Gamma\subset \Iso(\mathcal{X})$ is a discrete subgroup. The difference between $\mathcal{X}/\Gamma$ as a topological space and as an orbifold is that for the orbifold structure, one retains the information concerning $\Gamma$ and can reconstruct the topological space $\mathcal{X}$ from $\mathcal{X}/\Gamma$~\cite{ratcliffe2013foundations}. The group $\Gamma$ is called the fundamental group of the orbifold $\mathcal{O}.$ In the classical orbifold setting, $\Gamma$ is required to act cocompactly. We will only require the codomain to have finite area in its uniformized metric, i.e. the metric induced by $\mathcal{X}.$ 

In particular, we are interested in the case $\mathcal{X} = \mathbb{H}^2$, where $\Gamma$ is a NEC group (non-Euclidean crystallographic group), or a hyperbolic orbifold group.

Let $\mathcal{O}$ be a $2$D orbifold. We can identify the symmetry groups using Conway's \emph{orbifold symbol}, as described below, but extended by generators for the non-classical features our orbifolds might have, i.e. hyperbolic transformations $H_i$ of $\mathbb{H}^2$, corresponding to non-mirror boundary components of $\mathcal{O}$ and parabolic transformations $P_j$ corresponding to punctures. The diffeomorphic structure of $\mathcal{O}$~\cite{thurston} is determined by the Conway symbol for its fundamental group $\Gamma:= A\cdots H_i\cdots P_j\cdots \star abc\cdots\times\cdots\circ\cdots$. There are generators for the translations associated to each handle, given by $X$ and $Y$, and going around a handle in an oriented way corresponds to the curve associated to the commutator $\alpha:=[X,Y]=XYX^{-1}Y^{-1}.$ There are also generators for each gyration point of order $A$, and for a curve $\gamma$ going around the gyration point once we have $\gamma^A=1,$ where we interpret the curve as a deck transformation \cite{ratcliffe2013foundations}. For each mirror we have the usual Coxeter group relations, which depend on the angles of the intersecting mirrors. However, in the case where the interior of the orbifold contains nontrivial features, we need to choose one mirror per mirror boundary component that we give two generators $P$ and $Q$, ordered in positive orientation corresponding to its two mirror halves and one generator $\lambda$ for the curve that goes around this boundary component once in positive orientation. In the literature, $\lambda$ is known as the connecting generator for this mirror boundary component. We then add the relation $P=\lambda^{-1} Q \lambda$. Next, going around a crosscap corresponds to a generator $\omega$ with $Z^2=\omega$, where $Z$ corresponds to the curve entering the crosscap once. There is one \emph{global relation} for an orbifold, namely, the product of all Greek letters (plus the nonclassical elements) has to be trivial, i.e. 
\begin{align}\label{eq:globalrel}
\gamma...\Pi_iH_i\Pi_jP_j\lambda...\omega...\alpha...=1.
\end{align}
We shall refer to this presentation as the \emph{standard presentation of the fundamental group} of $\mathcal{O}$. To standardize notation, we can also assume that in the presence of a crosscap, all handles are replaced by two crosscaps each \cite{conwayzip}. In this paper, when we talk about geometric generators of orbifold groups, we generally mean generators of the above form, with a fixed cyclic order as in \eqref{eq:globalrel}.
 
Note that there is a description of the deck transformations in $\Gamma$ as homotopy classes of curves on the orbifold $\mathcal{O}$ \cite{ratcliffe2013foundations,Conway2002}, which we already used above in the description of $\omega$, and which is important to us. We can think of each element in $\Gamma$ as being represented by a homotopy class in the orbifolds (labelled) underlying topological space $O$, introduced in more detail in section \ref{sec:MCGorbifold} below. For this to work, one only has to agree that a closed curve that touches a mirror boundary in $O$ transversally lifts to a curve that crosses over the boundary in $\mathcal{X}$. See also \cite{Conway2002} for an illustration.

The elements of an orbifold fundamental group $\Gamma$ can be assigned types according to their algebraic properties and their action on the hyperbolic plane. Similar to \cite{Maclachlan1975}, we define the type of an element in $\Gamma$ as follows. Torsion elements that preserve the orientation of $\mathbb{H}^2$ are the elliptic transformations of a given order. Mirrors represent torsion elements of order $2$ that reverse the orientation. Orbifold groups like $\Gamma$, when viewed as conformal transformations of the upper half plane $U\subset \mathbb{C}$, have an associated limit set $\Lambda\subset \mathbb{R}$. The complement $C$ of $\Lambda$ in $\mathbb{R}$ has more than one connected component if $\mathcal{O}$ has a boundary. The conjugates of powers of the hyperbolic transformations associated to the boundary components of $\mathcal{O}$ map some component of $C$ to itself and are called \emph{boundary hyperbolic}. There are also the parabolic transformations corresponding to the punctures and the orientation preserving as well as the reversing hyperbolic transformations associated to the genus of a surface.  We call an automorphism (or isomorphism) of $\Gamma$ \emph{type-preserving} if it preserves the types of all elements in $\Gamma.$ We note here that a homeomorphism of hyperbolic orbifolds with symmetry groups $G_1$ and $G_2$, for our purposes, can be defined as a homeomorphism $f$ of $\mathbb{H}^2$ that satisfies $fG_1f^{-1}=G_2$.

\section{Isotopic Tiling Theory}\label{sec:ddtheory}

Tesselations of $\mathbb{H}^2$ can be studied using combinatorial tiling theory \cite{DRESS1987,Huson1993}. Combinatorial tiling theory classifies all possible equivariant combinatorial types of tilings on simply connected metric spaces $\mathcal{X}$ of constant curvature. It deals with the case that each tile is a closed and bounded disk and the symmetry group of the tiling acts cocompactly. 

A locally finite\footnote{This is defined as meaning that any compact set in $\mathcal{X}$ meets only a finite number of tiles.} set $\mathcal{T}$ of such topological disks in $\mathcal{X}$ is called a \emph{tiling} if every point $x\in\mathcal{X}$ belongs to some disk (tile) $T\in \mathcal{T}$ and if for every two tiles $T_1$ and $T_2$ of $\mathcal{T}$, $T_1^0\cap T_2^0=\emptyset,$ where $S^0$ denotes the interior of a set $S.$ 

We call a point that is contained in at least $3$ tiles a \emph{vertex}, and the closures of connected components of the boundary of a tile with the vertices removed \emph{edges}. The only exception to this are two-fold rotational centers of symmetry, which we also  consider to be vertices, depending on the context.
\begin{definition}
Let $\mathcal{T}$ be a tiling of $\mathcal{X}$ and $\Gamma$ be a discrete subgroup of $\Iso(\mathcal{X})$. If $\mathcal{T}=\gamma\mathcal{T}:=\{\gamma T|t\in\mathcal{T}\}$ for all $\gamma\in\Gamma$, then we call the pair $(\mathcal{T},\Gamma)$ an \emph{equivariant tiling} and $\Gamma$ its symmetry group. 
\end{definition}
We call two tiles $T_1,T_2\in \mathcal{T}$ equivalent or symmetry-related if there exists $\gamma\in\Gamma$ s.t. $\gamma T_1=T_2.$ We call the subgroup of $\Gamma$ that leaves invariant a particular tile $T\in\mathcal{T}$ the stabilizer subgroup $\Gamma_T.$ A tile is called \emph{fundamental} if $\Gamma_T$ is trivial and we call the whole tiling fundamental if this is true for all tiles. An equivariant tiling is called tile-, edge-, or vertex-$k$-transitive, if the number of equivalence classes under the action of the symmetry group is $k.$ Note that the above definitions do not require $\Gamma$ to be the maximal symmetry group for the tiling $\mathcal{T}$.
A fundamental tile-1-transitive equivariant tiling (fundamental tiling for short) $(\mathcal{T}, \Gamma)$ has a single type of tile that is a fundamental domain for $\Gamma$ and any fundamental domain for $\Gamma$ also gives rise to such a tiling. The following notion of equivalence among equivariant tilings is central to combinatorial tiling theory.
\begin{definition} 
Two equivariant tilings $(\mathcal{T}_1, \Gamma_1)$ and $(\mathcal{T}_2, \Gamma_2)$ of a simply connected space $\mathcal{X}$ are \emph{equivariantly equivalent} if there is a homeomorphism, $\phi$, of $\mathcal{X}$ and a group isomorphism $h: \Gamma_1 \to \Gamma_2$, such that $\phi(T_1) \in \mathcal{T}_2$ for all $T_1 \in \mathcal{T}_1$ and 
$h(\gamma_1) [(\phi(T_1)] = \phi(\gamma_1[T_1])$ for all $\gamma_1 \in \Gamma_1$. 
\end{definition}
 Intuitively, this definition  means that two equivalent tilings are the same after a change of coordinates that maps the symmetries of one onto the other. We will also sometimes refer to this notion of equivalence as \emph{combinatorially equivalence} among equivariant tilings~\cite{Lucic1990}.

The mechanism behind combinatorial tiling theory that leads to a classification of equivariant equivalence classes of tilings is to consider a barycentric subdivision, invariant under the symmetry group, of the tiles in an equivariant tiling. This subdivision gives rise to a simplicial complex known as a \emph{chamber system}. Keeping track of how chambers gets mapped to one another by elements of the symmetry group of the tiling and subsequent tracking of the combinatorics of the chamber system gives rise to a complete invariant of the combinatorial class of the tiling~\cite{DRESS1987,Huson1993}. 

To generalize the above framework to work with more general symmetry groups and also non-simply connected spaces we detail a slightly different point of view using orbifolds and a concrete realization of a symmetry group $\Gamma$ in $\Iso(\mathcal{X})$, where we particularly emphasize the case $\mathcal{X}=\mathbb{H}^2$. 

One can view tilings as combinatorial structures or classes of decorations on orbifolds. This is based on the simple observation that any tesselation has the symmetry group of a developable orbifold. The underlying topological space of $\mathcal{O}$ can be extracted from any fundamental domain for $\Gamma$ in $\mathbb{H}^2,$ with appropriate edge identifications corresponding to the action of generators of $\Gamma$ on the fundamental domains boundary. The action of $\Gamma$ also gives rise to a fundamental transitive tiling. Each fundamental transitive tile can also be interpreted as a (bordered) fundamental domain and can thus each be seen as a possible canvas, on which we can draw any orbifold decoration (after getting rid of the boundary edges, if necessary). A drawing on a fundamental domain corresponds to a piecewise linear embedding of a graph into the orbifold. In the language of Delaney-Dress tiling theory, each chamber system encoding an equivariant tiling with symmetry group $\Gamma$ essentially corresponds to a triangulation of $\mathcal{O}=\mathbb{H}^2/\Gamma.$ 
 
When viewing tilings as combinatorial decorations on orbifolds, it becomes natural to consider the more general situation of finite volume orbifolds and thus more general symmetry groups for the tilings than for classical Delaney-Dress tiling theory. There are a number of ways of approaching this problem. Section $4$d in \cite{DRESS1987}, includes a sketch of how to adapt the statements made for the case of bounded tiles to work for an equivariant tiling theory for symmetry groups with cusps. To incorporate punctures, one treats the cusps as marked points belonging to the surface and analyses equivariant tilings in terms of chamber systems and geometric cell complexes like in the original setting. As a triangulation of a 2D orbifold, this means that there is a vertex of the chamber system that is placed on a puncture. When embedding these into a manifold to obtain a tesselation, one needs to remove the cusps before embedding. Geometrically, the idea corresponds to pushing the marked points to the boundary of the unit circle in the Poincar\'e model for $\mathbb{H}^2.$  Alternatively, a puncture in the orbifold, corresponding to a parabolic transformation in $\mathbb{H}^2$, can be seen as the limit of a sequence of gyration points of increasing order, with order $\infty$. This is in line with the Conway notation for orbifolds. From this point of view, the tilings for finite volume orbifolds with punctures are attained as limits of tilings for orbifolds where the puncture is a gyration point of increasing order. We will therefore treat the cusped case essentially in the same way as the classical case, but with order $\infty$ singular points.  

The case of a surface with boundary can be treated similarly by simply assuming that the boundary is covered by edges of the tiling for the barycentric subdivision and subsequently treating the chambers along the boundary as neighbouring themselves.

For simply connected spaces and combinatorial classes of tilings, starting from the fundamental tilings, all other equivariant tilings with the same symmetry group are obtained by applying GLUE and SPLIT operations~\cite{delone78,Huson1993}. The different combinatorial types of a fundamental domain for a given classical orbifold were classified in \cite{Lucic1991}. Using the Delaney-Dress ($D$)-symbol, one can give unique names to the combinatorial structures on $2$-orbifolds that represent tilings on $\mathcal{X}$, which can be used for enumeration purposes \cite{Delgado-Friedrichstilings}. We will subsequently focus on fundamental tilings. 
 
Given the generators, described in section \ref{sec:orbs}, of a symmetry group $G\subset \Iso(\mathbb{H}^2),$ the $D$ symbol describes how the group acts on the associated chamber system of a tiling \cite{DRESS1987}, where the chambers are triangles in a triangulation of the orbifold. A fundamental tiling is obtained from a fundamental domain for $G$, with the given generators acting on its boundary edges. By the Poincar\' e theorem, a set of (geometric) generators of $G$ all map part of the fundamental tile's boundary to itself to yield a presentation of the symmetry group, and the $D$ symbol tells us in which way. Note that even if we restrict to geodesically bordered tiles, the $D$ symbol only defines a tiling up to shearing the fundamental domain. 

We now explain a fundamental observation that is one of the starting points of our investigation. The Teichm\" uller space $T(G)$ is the space of type-preserving, discrete faithful representations in $\PGL(2,\mathbb{R})$ (for orientable orbifold groups, one restricts to $\PSL(2,\mathbb{R})$) of the abstract hyperbolic group $G$ with standard presentation, modulo conjugation by elements in $\PGL(2,\mathbb{R})$. This space carries a natural topology, namely the subspace and subsequent quotient topology of $\Hom(G,\PGL(2,\mathbb{R})),$ which itself is endowed with the compact-open topology. The topology of $G$ is the discrete one and $\PGL(2,\mathbb{R})$ carries the topology it inherits from its usual structure as a Lie group. 

As an aside, we comment on perhaps a more classical or standard definition of the Teichm\" uller space of a (topological) surface $S$. Given a hyperbolic surface $X$, one can define a hyperbolic structure on $S$ by using a diffeomorphism $\varphi :S\to X$, known as a \emph{marking}. The Teichm\" uller space is then defined as the space of hyperbolic structures modulo homotopy. Here, two markings $\{\varphi_i:S\to X_i\}_{i=1}^{2}$ are homotopic if there is an isometry $I:X_1\to X_2$ such that $I\circ\varphi_1$ is homotopic to $\varphi_2$. 

We sketch the equivalence of the two definitions of Teichm\" uller space for an orbifold $\mathcal{O}$. Any hyperbolic structure $\varphi : \mathcal{O}\to X$ leads to a Riemannian covering $\pi:\mathbb{H}^2\to \mathcal{O}$, which defines a group of deck transformations up to conjugation by elements in $\Iso(\mathbb{H}^2)$. The marking $\varphi$ induces an isomorphism of $G=\pi_1(\mathcal{O})$ and $\pi_1(X)$ and thus of the deck transformations of the covering by $\mathbb{H}^2$, see section \ref{sec:MCGorbifold} below. Thus, we see that a marking gives rise to a representation of $G.$ For the converse, notice first that a representation $\rho$ of $G$ like in the above definition induces a discrete group in $\Iso(\mathbb{H}^2)$ such that the quotient space has the structure of a hyperbolic orbifold $X$, diffeomorphic to the original $\mathcal{O}$. Now, $\rho$ induces an isomorphism of fundamental groups from $G$ to $\pi_1(X)$. Any isomorphism of fundamental groups of orbifolds is realized as a homeomorphism of $\mathbb{H}^2$(see section \ref{sec:MCGorbifold} below for more details), which induces a homeomorphism between the orbifolds $\mathcal{O}$ and $X$, which is the sought-for marking. The well-definedness is clear because a conjugate of $\rho$ results in an isometric orbifold $X$ and thus homotopic markings. 
		
Most proofs regarding the topological structure of $T(G)$ make use of the second definition. This is essentially because it allows the deconstruction of the surfaces in question into smaller, easier building blocks such as pants or punctured disks. The proofs and ideas in \cite{thurston} that show that $T(G)$ is (component-wise) homeomorphic to $\mathbb{R}^{k}$ for some $k$ also work in our setting with more general orbifolds, when we expand the collection of primitive orbifolds with unique hyperbolic structures that assemble to produce more complicated orbifolds. For this, \cite{orbteich} contains a description of all pieces that are needed for the decomposition if one also allows boundaries. Also see \cite{Kerckhoff1983} for a reference that includes a discussion of the Teichm\" uller space of orbifolds with boundary components for classical surfaces and some of the subtleties involved. For punctures, one can take the pieces that account for rotational symmetries for orbifolds with infinite order. Alternatively, one can view punctures as boundary components with zero length.

The importance of the above is that it implies that two different sets of geometric generators for $G$ in $\Iso(\mathbb{H}^2)$ can be continuously deformed into one another in $\mathbb{H}^2$. The small caveat here is that for orientable $G$, there are two representations with opposite orientation (see section \ref{sec:outs}) in $\PGL(2,\mathbb{R})$, so the connectedness of $T(G)$ is only true for representations of the same orientation. During the process of continuously deforming one representation of $G$ into another, the combinatorial structure of the chamber system associated to the tiling remains invariant. Therefore, perhaps somewhat surprisingly, there is a set of combinatorial instructions for how to decorate the fundamental domain to produce a particular tiling from the generators which is independent of the particular representation of $G$ in $\PGL(2,\mathbb{R})$. This set of instructions can be extracted from the $D$ symbol, see figure \ref{fig:2224} for an example. Within a combinatorial class of fundamental tiles, we can interpret the other fundamental tiles with different positions for the generators as obtained by shearing the original one. This deformation can in fact be realized by a quasi-conformal mapping.

Another, more constructive way of extracting a set of combinatorial instructions for how to decorate the fundamental domain to produce a particular tiling from given generators which is independent of the particular representation of $G$ in $\PGL(2,\mathbb{R})$ is by first constructing the dual tiling. For a fundamental tile transitive tiling with symmetry group $G$, there is a set of so-called Wilkie generators for the symmetry group, corresponding to edge traversals of a fixed copy of any tile~\cite{Wilkie1966}. Expressing these in terms of the given generators, one constructs a tiling with one vertex orbit, dual to the original tiling.

Consider now the example of a fundamental $4g$ polygon of a closed hyperbolic Riemann surface $S$ of genus $g$ with given hyperbolic metric. The construction of a tiling starts from a given point $x\in S$ which is the base point of the generating curves $\{\gamma_i\}_{i=1}^{2g}$ for the fundamental group of $S$. Within each of the homotopy classes for the closed curves $\gamma_i$, there is a unique geodesical representative. Cutting the surface along these geodesics produces a hyperbolic tile and tesselation. While the homotopy classes of the $\gamma_i$ determine the combinatorial structure of the tiling, the choice of base point for the construction of a fundamental domain for the generators produces a plethora of metrically distinct fundamental tilings, which are all derived from the same point in $T(G).$ What different types of fundamental tilings can we create in this way? Any other tiling starts from a different point $p\in S$, and there is a path $c$ connecting $x$ and $p$. The path gives rise to an isotopy of $S$ by pushing the point $x$ along $c$ to $p$. In this way, $c$ uniquely determines a homeomorphism up to isotopies from $(S,x)$ to $(S,p)$, following results relating to the point-push map in \cite{primermcgs}. In particular, if one fixes a reference set of generators of $\pi_1(S)$, the resulting isotopy only leaves the set of generators invariant on $S$ if the path $c$ induces a trivial homeomorphism up to isotopy. This means that if we fix what the generators $\{\gamma_i\}$ map to in $\Iso(\mathbb{H}^2),$ there is only one isotopy class of tilings associated to $S$ and $\{\gamma_i\}$. The combinatorial information needed to produce the corresponding tiling is then simply given by any base point needed to construct the associated fundamental domain. The case $p=x$ is of particular interest. For nontrivial curves $c$, the generators in $\Iso(\mathbb{H}^2)$ change, but because the induced automorphism on $\pi_1(S)$ by any such curve $c$ is always inner, by the Dehn-Nielsen-Baer theorem \cite{primermcgs}, it corresponds to the isotopically trivial homeomorphism of the surface and does not change the tiling in $\mathbb{H}^2$.  

The same line of reasoning works for more general orbifolds $\mathcal{O}$ and their fundamental groups $G$, possibly with repeated use of the above arguments for more than one randomly chosen point, and explains why within a fixed set of generators for $\pi_1(\mathcal{O})\subset \Iso(\mathbb{H}^2)$ and combinatorial type of fundamental tiling with the generators acting on its boundary, the isotopy type of decoration does not depend on the choice of random points required in the construction, up to inner automorphisms of the chosen generators. Note that this does not mean that different sets of generators with the same combinatorial decorations never yield the same isotopy class of fundamental tilings, because the decoration of an orbifold that gives rise to a fundamental tiling is in general not sufficiently complicated to keep track of arbitrary changes. Sufficiently complicated decorations where this cannot happen always exist, as we shall see using more technical arguments below.

Generally, the edges of the fundamental tile can be given purely in terms of the generators, as edges connecting symmetry points, or randomly chosen points. The random points show up in the triangulation that is the chamber system of the orbifold when a vertex is not located at an increased symmetry site. This situation can be read off the $D$ symbol. Using this approach to fundamental tilings from the chamber system related to the $D$-symbols gives a completely algebraic/combinatorial way of producing the fundamental tilings from the generators of $G.$ In practice, this invariant description in terms of generators comes from simply producing a combinatorial version of a tiling from the $D$-symbol and then placing the vertices in the associated decoration accordingly, see figure \ref{fig:2224}. In doing so, the vertices have to be given in terms of their positions relative to the generators.\footnote{This reasoning also works for higher dimensional hyperbolic orbifolds. However, as a result of Mostow's rigidity theorem for developable hyperbolic orbifolds (see \cite{Prasad1973}, \cite{Mostow1968}, \cite{Boileau2004}) there is only one set of generators for the corresponding group and any way to produce all combinatorially distinct fundamental tilings works, without the need to reference specific generators.}

\begin{figure}[!tbp]
\captionsetup{justification=centering}
  \begin{subfigure}[t]{0.31\textwidth}
  \centering
    \includegraphics[width=\textwidth, height=\textwidth]{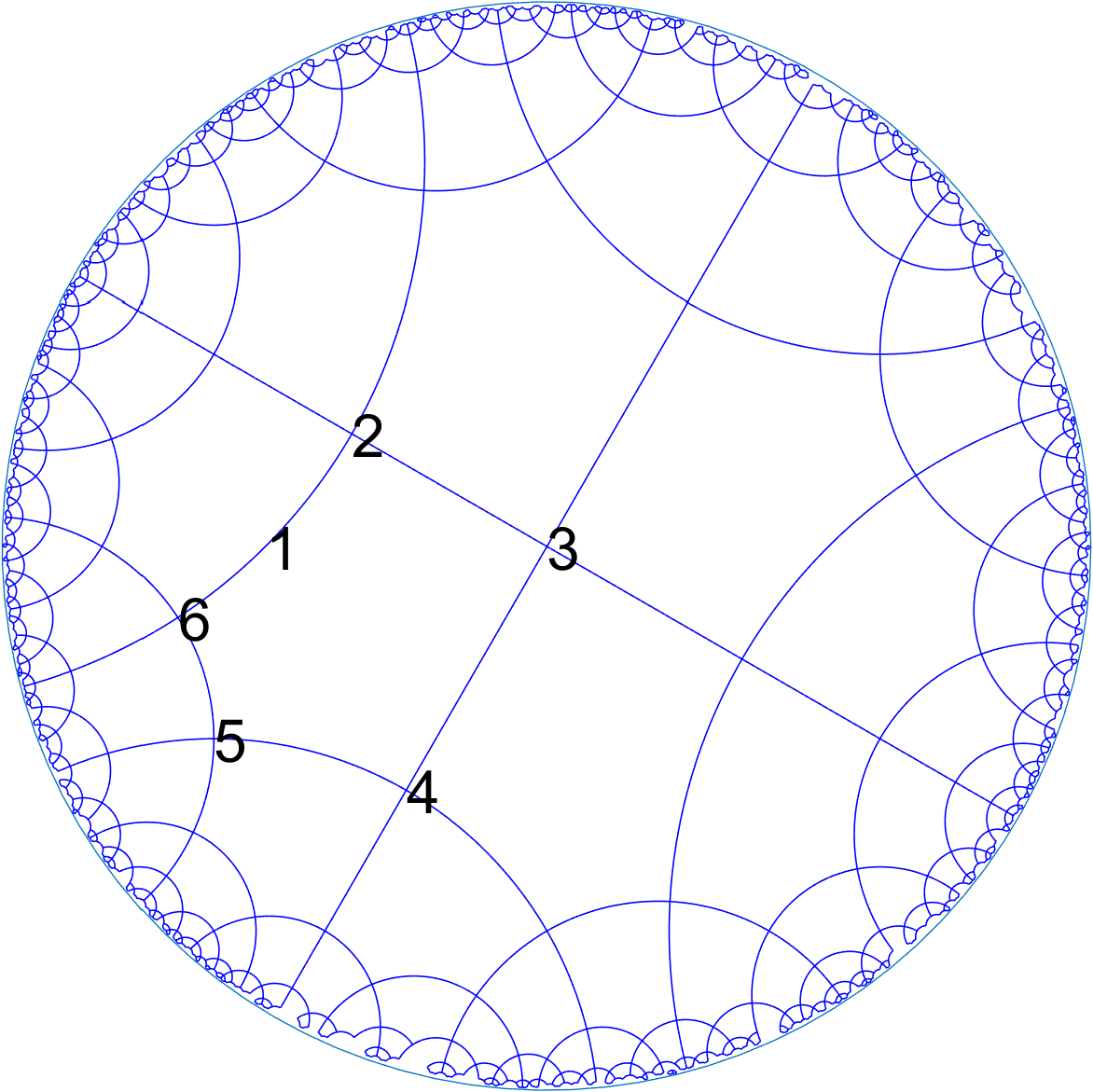}
    \caption{A transitive, fundamental equivariant tiling of $\mathbb{H}^2$ with symmetry group $2224$, generated by rotations around the points labelled $1$ to $4$.}\label{fig:2224ft1}
  \end{subfigure}
  \begin{subfigure}[t]{0.31\textwidth}
  \centering
    \includegraphics[width=\textwidth, height=\textwidth]{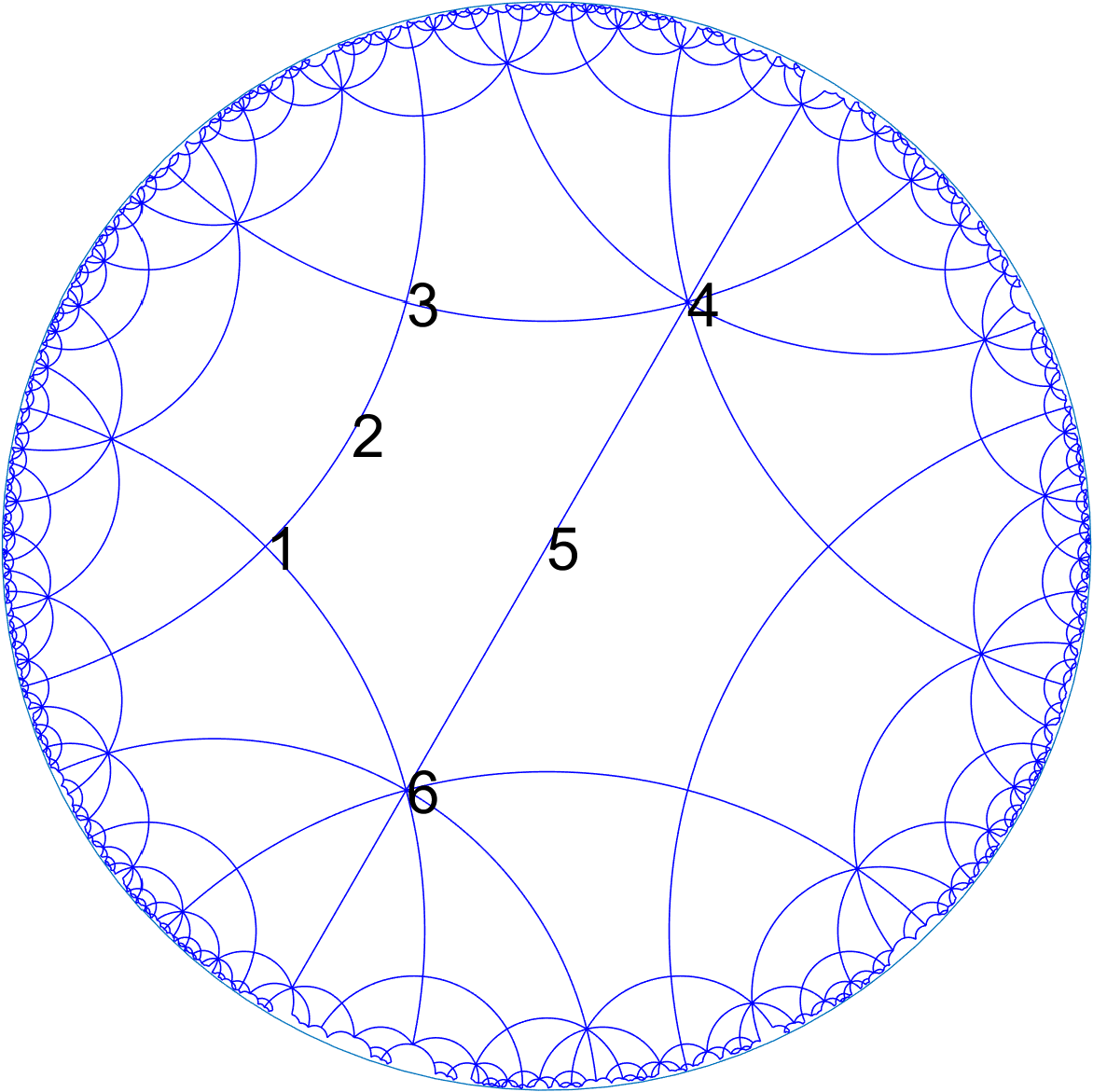}
    \caption{Another transitive, fundamental equivariant tiling of $\mathbb{H}^2$ with symmetry group $2224$, with positions of generators as in (a).}\label{fig:2224ft2}
  \end{subfigure}
  \begin{subfigure}[t]{0.31\textwidth}
  \centering
    \includegraphics[width=\textwidth, height=\textwidth]{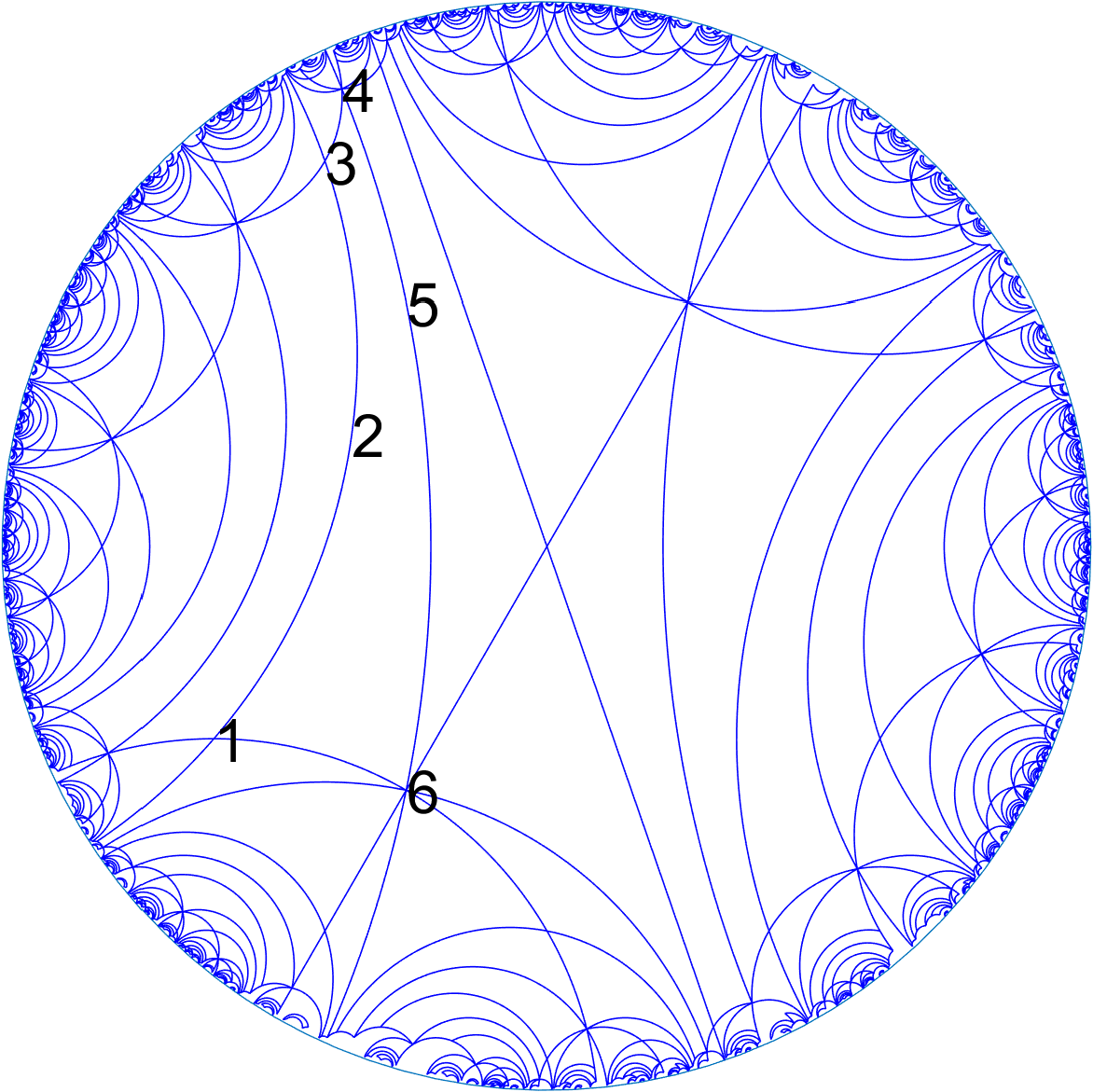}
    \caption{A different placement of generators, producing a sheared version of the fundamental tiling}\label{fig:2224othergens}
  \end{subfigure}
  \caption{Fundamental tilings with symmetry group $2224$, produced by constructng the convex hull of the indicated points, whose position is fixed by the positions of a subset of them that correspond to generators of $2224$. Repeated applications of these symmetires then generates the whole tiling.}\label{fig:2224}
\end{figure}

As an illustration, consider figure \ref{fig:2224}, which shows tilings with a realization of the hyperbolic orbifold group $G=2224$ as a group of isometries in $\mathbb{H}^2$. The placements of the generators in $\mathbb{H}^2$ is indicated in figure \ref{fig:2224ft1}(vertices $1$ to $4$ corresponding to the generators $2224$, respectively) and allows a fundamental tiling for the supergroup $\star 2224$ simply by considering the convex hull in $\mathbb{H}^2$ of the points $1$ to $4$. Now, there are two ways that a fundamental domain for $2224$ can exhibit the symmetries of $\star 2224$. One is obtained by reflecting across the axis through the points $1$ and $4$, as indicated in figure \ref{fig:2224ft1}. The other is obtained by doing the same across the axis through points $2$ and $3$, as has been done in figure \ref{fig:2224ft2}. By the above discussion, we can combinatorially give a description of the edges belonging to the fundamental tiling. In figure \ref{fig:2224ft1}, consider the rotations corresponding to the generators $r_1,...,r_4,$ with centers $c_1,...,c_4\in\mathbb{H}^2$. Because the tiling is obtained by doubling the fundamental tiling of $\star 2224$, it is straightforward to see that the corners/increased symmetry points on the polygon's boundary correspond clockwise, starting at $c_1$, to the points  $c_1,c_2, c_3, c_4, r_4(c_3), r_1(c_2)$. This procedure readily generalizes to arbitrary stellate orbifolds, i.e. those with only rotations for generators. 
Given any generators $r_1,...,r_4$ in $\Iso(\mathbb{H}^2)$ for $2224$, this description of edges defines a fundamental tiling, in this case with geodesic edges, regardless of the generators placement in $\mathbb{H}^2$. Similarly, for the fundamental tiling of figure \ref{fig:2224ft2}, the edges are given by hyperbolic lines connecting the points $1$ to $6$ in cyclic order, which correspond to the points $c_1,c_2, r_2(c_1),r_3^{-1}(c_4),c_3,c_4,$ respectively. Figure \ref{fig:2224othergens} illustrates that this relation for the edges still holds in a sheared version of the fundamental tiling with symmetry group $G$. Here, the sheared fundamental domain does not have the additional symmetries of the other two tilings.

Recall that we want to classify equivariant tilings of a hyperbolic Riemann surface $S$ in its uniformized metric, i.e. given a fundamental hyperbolic polygon of $S$ in $\mathbb{H}^2$, we want to find all ways of equivariantly tiling it, with fixed symmetry group $G\subset \Iso(S)\subset \Iso(\mathbb{H}^2).$ The above suggests an appropriate notion of equivalence for this is to consider equivariant tilings with the same symmetry group that are isotopic in $S$ equivalent. We will see below in section \ref{sec:bhtheory} that two tilings being isotopic in this sense is equivalent to the more strong assumption that two equivariant tilings are equivalent if there is an isotopy between them that preserves their symmetries at every step, which we sum up as follows.
\begin{definition}
Two equivariant tilings with the same symmetry group are isotopically equivalent if they are equivariantly equivalent such that there exists a homeomorphism as in the definition of equivariant equivalence that is isotopic to the identity through a path of homeomorphisms, each of which preserves the symmetry group at every step.
\end{definition}
As far as isotopic tiling theory is concerned, it is not enough to consider just the abstract group $G$ and the associated $D$ symbols in our more general setting. Instead, it is important to use the method of producing fundamental tilings from $D$ symbols along with specific generators for $G$ as outlined above. There is a way to carefully choose only those sets of `locations' for generators for $G$ that yield \emph{a priori} different fundamental tilings of $S$ (see sections \ref{sec:outs} and \ref{sec:bhtheory} below). 

Consider tiling the genus $3$ fundamental polygon of the Riemann surface $S$ in $\mathbb{H}^2$ with symmetry group $\star 246$. There are three different versions of the $22222$ subgroup that are supergroups of $\pi_1(S)$. By Hurwitz' theorem, there is a smallest (area-wise) possible hyperbolic group $G_0$ that is a supergroup of $\pi_1(S)$ and all three versions of $22222$ will be a subgroup of $G_0$ and we see that $\star 246=G_0.$ Each version of $22222$ now has to be treated independently of the others when classifying all isotopy classes of equivariant tilings on $S$. Indeed, the fundamental tilings for every possible set of generators for each of these groups are non-isotopic as tilings on $S$ (see section \ref{sec:bhtheory} below).

Before we go on to introduce new tools for tackling the new challenge of finding appropriate sets of generators for the symmetry groups of tilings, we would like to point out how the GLUE and SPLIT operations work in this new setting. To define GLUE and SPLIT for isotopy classes of tilings, we simply fix one way of implementing them as operations on some representative tiling inductively. One could ask if two different sets of generators $S_1, S_2$ for the same group that produce different fundamental tilings lead to the same tiling of $S$ after a sequence of such operations. If this were the case, then firstly the sequence of operations would be different since they are invertible. However, this would mean that these two different sequences of operations, each applied only to tilings derived from $S_1$ would yield combinatorially equivalent tilings. These are equivalent in the classical Delaney-Dress tilings theory, so no additional ambiguity emerges by us distinguishing between tilings associated to different sets of generators for the symmetry groups. In particular, it is, in a way, very natural to consider the isotopy classes of tilings w.r.t. a set of generators for the symmetry group. Furthermore, this result is very important for enumerative isotopic tiling theory. Note, though, that it is possible that two isotopically distinct tilings that are combinatorially equivalent yield isotopically identical tilings after application of GLUE or SPLIT operations. We summarize these observations. 
\begin{proposition}\label{prop:gluesplitiso}
Let $\Gamma$ be an orbifold group and $L$ an exhaustive and unambiguous list of GLUE and SPLIT operations that yield combinatorially distinct tilings from fundamental tile-$1$-transitive equivariant equivalence classes of tilings with symmetry group isomorphic to $\Gamma$. Then $L$ applied to all isotopy classes of fundamental tile-$1$-transitive tilings produced from all nonconjugate sets of geometric generators for $\Gamma$ yields an exhaustive (but not generally unambiguous) enumeration of isotopy classes of tilings with symmetry group $\Gamma$.
\end{proposition}
Whenever an automorphism of the graph in the orbifold quotient space $O$ that gives rise to a tiling is realized as a homeomorphism of $O$, then we have an ambiguity in the enumeration using GLUE and SPLIT. With the results on the Nielsen realization problem in section \ref{sec:finiteorders} below, we have the result that whenever this is the case, there is a realization of the symmetry group $\Gamma$ of the tiling such that the ambiguities are a result of symmetries in a discrete supergroup of $\Gamma.$ In particular, such a situation can be identified from the D-symbol, as the symmetries of a tiling that exist for some realization manifest as automorphisms of the D-symbol graph. As a consequence, the isotopy theory of tilings described here can arguably be best described as an isotopy theory of coloured tilings, where each edge of a tiling is given a different colour, so that they are distinguishable. When the edges of a tiling are coloured, the enumeration of isotopy classes of tilings using GLUE and SPLIT is unambiguous.

\section{The Group of Outer Automorphisms}\label{sec:outs}

Let $G$ be a hyperbolic orbifold group. The group of all automorphisms of $G$ is denoted by $\Aut(G)$. For example, conjugation by any element $g\in G$ induces an automorphism $c_g(\tilde{g}):=g\tilde{g}g^{-1}$ for $\tilde{g}\in G$ . Such automorphisms are traditionally called inner automorphisms, and the normal subgroup of all of them is denoted by $\Inn(G):=\{c_g|g\in G\}.$ Picturing $G$ as a discrete group of isometries of a Riemannian manifold $M$, roughly speaking, after having chosen a fundamental domain $D$ for $G$, we can reconstruct $M$ by applying elements of $G$ to $D$, and $M$ breaks up into copies of $D$. Under an inner automorphism of $G$, the elements of $G$ that are used to construct $M$ from $D$ translate to instead building $M$ from the same pieces in the same way, starting from another copy of $D$. In view of the previous section, any fixed method of constructing a tiling from generators will reproduce the exact same tiling for any set of conjugate generators. For this reason, we are actually interested in the group of outer automorphisms $\Out(G):=\Aut(G)/\Inn(G).$ 

We fix a set of geometric generators $G_1\subset\Iso(\mathbb{H}^2)$ for $G$ and consider subsets $S$ of the elements of $G\subset\Iso(\mathbb{H}^2)$. We are interested in the following question: When does $S$ constitute a set of geometric generators for $G$ with the same relators? Interpreted within the context of group automorphisms, any such set $S$ gives rise to an automorphism $\alpha$ of $G$, by associating corresponding generators via $\alpha$. Since geometric generators satisfy the same relations, $\alpha$ has a well-defined extension to all of $G$, by expressing any $g\in G$ as some word in the generators and imposing the condition that $\alpha$ is a morphism of groups.

In this way, starting from $G_1$, we see that for any other set of generators $S$, we have a corresponding element of $\Out(G)$, where $G_1$ corresponds to the identity morphism. Note, however, that we are not interested in the full group of automorphisms. Instead, we will restrict our attention to the subgroup of type-preserving automorphisms, as defined in section \ref{sec:orbs}. This restriction is exactly what is needed to ensure that the combinatorics of general tilings are invariant when given as decorations of the associated orbifold (see theorem \ref{thm:mcgout} below). 

Any tesselation with symmetry group $G$ is clearly invariant under an inner automorphism of $G$. The converse is also true - the inner automorphisms of $G$ are the only orientable automorphisms that leave invariant any decoration of fundamental domains for compact orbifold groups $G$. We will prove a version of this statement in proposition \ref{prop:BHgen} below. One way to think about this is to look at the relation between orbifold group elements and curves on the orbifold, which in turn can be interpreted as decorations lifted to the universal cover. Thus, when a sufficiently complicated decoration of the fundamental domain is invariant w.r.t. an (orientation preserving) homeomorphism of the underlying orbifold, the underlying homeomorphism must be isotopically trivial in the orbifold because it fixes all curves and therefore orbifold elements. This means that it corresponds to an inner automorphism of $G$ by theorem \ref{thm:mcgout} below. In case of noncompact orbifolds, this statement is only true for \emph{geometric automorphisms}. 
\begin{definition}\label{def:geomauto}
We call an automorphism $\alpha$ of an orbifold group $\Gamma$ \textit{geometric}, if there exists a homeomorphism $f$ of $\mathbb{H}^2$ that is $\Gamma$ fiber-preserving w.r.t. the universal covering of the orbifold by $\mathbb{H}^2$ (or a totally geodesic subspace thereof in case of boundaries) and induces $\alpha$ via $\alpha(\gamma)=f\gamma f^{-1}$, where $\gamma\in \Gamma\subset\Iso(\mathbb{H}^2)$. Equivalently, an automorphism is geometric if it is induced by a homeomorphism of the orbifold associated to $\Gamma$.	
\end{definition}

Any equivariant tiling corresponds to a decoration of an orbifold, and combinatorial equivalence between tilings means that there is a homemomorphism of $\mathbb{H}^2$ that maps the tilings to each other and induces a homeomorphism of the orbifolds. Such a homeomorphism must map boundary components to boundary components, cone points to cone points of the same order, mirror boundaries to similar mirror boundaries, and punctures to punctures.  Therefore, combinatorially equivalent tilings are never related by nongeometric automorphisms. It turns out that type-preserving automorphisms are closely related to geometric automorphisms as we shall see more precisely in theorem \ref{thm:mcgout}. 

Summarizing, we are not interested in the full group of outer automorphisms, because in the general case of orbifolds with boundaries or punctures, the designation of the type of the generator as a hyperbolic translation or a boundary hyperbolic transformation is important to us. Note also that while orientation is a geometric notion, there is an algebraic analogue~\cite{Zieschang1966} that captures the intuition of the geometric notion, so it makes sense for us to talk about the orientation of automorphisms of an abstractly defined hyperbolic orbifold group, without a specific realization of an orbifold as a group of isometries in $\mathbb{H}^2$. Once expressions for decorations in terms of geometric generators are known, the original surface decorating problem reduces to the study of the group of outer automorphisms of a hyperbolic orbifold symmetry group. 

We are now prepared to formulate a result that highlights the importance of the $2$D setting. The Mostow rigidity theorem implies that the deformation space of finite volume hyperbolic structures on an orbifold of dimension $\ge 3 $ is a singleton. In particular, $\Out(\mathcal{O})$ is trivial and once we have chosen generators for the symmetry group, there is no way to obtain other generating sets via a geometric automorphism. In effect, this means that the combinatorial tiling theory for such non-simply connected hyperbolic manifolds is the same as classical combinatorial tiling theory, which does not take into account different sets of generators and all possible isotopy classes of tilings can be attained by randomly choosing points w.r.t. which one produces the Dirichlet fundamental domain. 
   
\section{The Mapping Class Group of an Orbifold}\label{sec:MCGorbifold}

Our goal is to classify all sets of geometric generators for hyperbolic orbifold groups  $\pi_1(\mathcal{O})\subset\Iso(\mathbb{H}^2)$ that lead to different tilings when decorated in a fixed way, according to $D$-symbols. Moreover, we want to investigate rigorously in what sense this construction of tilings from generators gives rise to a unique tiling. We assume some working knowledge of MCGs of classical surfaces~\cite{primermcgs}. We now introduce the mapping class group (MCG) of orbifolds and prove fundamental results facilitating its applications to tiling theory. 

Let $\mathcal{O}$ be a not necessarily orientable hyperbolic $2$-orbifold, possibly with finitely many punctures and some boundary components. Denote by $O$ its underlying topological surface with weighted marked points at conical singularities of order equal to the assigned weight. Punctures can be treated as part of $O$ by assigning the weight label $\infty$, see section \ref{sec:orbs}. Every mirror in $\mathcal{O}$ represents part of a boundary component in $O$, corresponding to the set of points in its singular locus. Now, similar to conical singularities, we label the mirror boundary components according to their type, which is essentially given by its corresponding substring of the Conway symbol for $\pi_1(\mathcal{O})$. This essentially corresponds to marking the corner points of a miror with a label according to substrings of the Conway symbol. Given such a substring of a symmetry group's Conway symbol that designates a mirror boundary component, it is possible to cyclically permute the numbers in the string and, moreover, to reverse their cyclic ordering in the presence of a crosscap without changing $\pi_1(\mathcal{O})$. Even without a crosscap in the symmetry group, one may reverse the cyclic orders for all mirror boundaries simultaneously~\cite{Conway2002}. Given such an equivalence class of substrings of a Conway symbol with representative $\star abc...d$, place a point marked by an $a$ on the boundary and continue inserting the points $bc...d$ counterclockwise on the boundary, producing a labelled boundary.

We call $O$ together with its labels the \emph{labelled underlying surface} of $\mathcal{O}$. Let $\Sigma$ be the singular locus of $\mathcal{O}$, i.e. the (not necessarily isolated) branch point set of the covering $\mathbb{H}^2\to \mathbb{H}^2/\pi_1(\mathcal{O})$ and denote $O_0:=O-\Sigma.$ Note that $O$ and $O_0$ are orientable if mirrors are the only orientation reversing features of $\mathcal{O}$. We can picture $O_0$ as embedded in a surface with usual boundary components in place of the mirror components.

We consider $\Hom(\mathcal{O}),$ the group of homeomorphisms of $O$ that leave invariant the features of $O$, i.e. do not change the weight assigned to marked points, map punctures to punctures, other boundary components to other boundary components and mirrors to mirrors of the same type, meaning that we require the homeomorphisms to map the marked points on mirror boundaries to marked points on mirror boundaries with the same label, which guarantees that the mirrors in-between them get mapped accordingly. We will assume the homeomorphisms to preserve the orientation of an orientable orbifold and point out the distinction only where there is ambiguity. Endowing $\Hom(\mathcal{O})$ with the compact-open topology turns it into a topological group. Denoting by $\Hom_0(\mathcal{O})$ the connected component of the identity in $\Hom(\mathcal{O}),$  we define the \emph{mapping class group} (MCG) of the orbifold $\mathcal{O}$ as $\Mod(\mathcal{O}):=\Hom(\mathcal{O})/\Hom_0(\mathcal{O}).$  In other words, $\Mod(\mathcal{O})$ is the group of isotopy classes of the homeomorphisms in $\Hom(\mathcal{O})$, where at every step of an allowed isotopy the corresponding map is in $\Hom(\mathcal{O})$. It is possible for an element of $\Hom(\mathcal{O})$ to map a mirror boundary component to itself in a nonisotopically trivial way. For example, when $\mathcal{O}$ has a mirror boundary component labelled by $\star abcd abcd$, then the homeomorphism that twists the boundary half-way around itself is an element of $\Hom(\mathcal{O})-\Hom_0(\mathcal{O})$. For boundary components that are not mirrors, in contrast to standard ways of defining the MCG, for which homeomorphisms are required to fix boundaries pointwise~\cite{primermcgs}, this means that all homeomorphisms and isotopies can twist around the boundary.  For any $[g]\in\Mod(\mathcal{O})$ that does not permute boundary components, there is thus always a representative $f\in \Hom(\mathcal{O})$ that fixes the boundary pointwise if $g$ is orientation preserving~{\cite[Theorem 5.6]{Epstein1966}}. There is thus no difference in how nonmirror boundaries and punctures are treated topologically. There is a well-known way to relate the two ways of defining MCGs for (orientable) surfaces with boundaries~\cite[Proposition $3.19$]{primermcgs}. One of the advantages of defining MCGs that fix boundaries pointwise in other contexts is that MCGs of subsurfaces can be related more easily to subgroups of the MCG of an ambient surface because one can extend homeomorphisms that fix boundaries. 

The following is based mainly on ideas from \cite{Maclachlan1975} and \cite{fujiwara}, whose results we generalize and make precise. The MCG is usually defined using homotopies instead of isotopies. For orientable $\mathcal{O}$, {\cite[Lemma 2]{Maclachlan1975}} shows that $f\in \Hom(\mathcal{O})$ is homotopic in $O_0$ to the identity if and only if it is isotopic in $\Hom(\mathcal{O})$ as defined above. Now, the arguments in the proof of the lemma are based almost exclusively on results by Epstein in \cite{Epstein1966}, whose results are also proved for nonorientable surfaces and not necessarily hyperbolic ones. All ordinary boundary components of $O$ are disjoint from those with mirrors. Therefore, all homeomorphisms that are homotopic to the identity on mirror boundary components are treated in the same way as boundary components but disjointly and the proof remains correct word for word. Thus, we have
\begin{lemma}\label{lem:homiso}
Let $f,g\in \Hom(\mathcal{O})$ be such that they are homotopic on mirror boundary components. Then $[f]=[g]\in \Mod(\mathcal{O})$ if and only if $f$ and $g$ are homotopic in $O_0$. 
\end{lemma}
This lemma illustrates that mirror boundaries require a slightly different treatment from other aspects	of the MCG. 

Let $f$ be a representative of an element of $\Mod(\mathcal{O})$ that maps a single mirror boundary component $m$ to itself nontrivially but is otherwise isotopically trivial, i.e. is supported in a neighborhood of $m$. By definition, $m$ can be interpreted as a polygon with edges corresponding to mirrors. Then, $f$ corresponds to a finite number of Dehn twists around $m$ and possibly a reflection, acting on the constituent mirrors. In $O_0,$ $m$ corresponds to a boundary component, and we can talk about the orientation of this component, given locally in a suitable neighbourhood of $m$. For example, a mirror with label $\star abccba$ admits a nontrivial homeomorphism that corresponds to a reflection that reverses the orientation of the boundary underlying $m$ and therefore the orientation of the curve $\lambda$ that goes around $m$ once. In particular, this homeomorphism of $O$ is generally not supported in a small neighbourhood of $m$, since such an orientation reversing homeomorphism maps the curve going around $m$ to the inverse curve in $\pi_1(\mathcal{O})$, which can only leave the global relation invariant if it also changes other generators for sufficiently complicated orbifolds. The subgroups of orientation preserving elements of any subgroup of the dihedral group are well-known to be cyclic. In particular, there exists an element $c$ of $\Hom(\mathcal{O})$ that is supported in a neighbourhood $U\subset O$ of $m$ that generates all nontrivial orientation preserving homeomorphisms of $U$ that leave $m$ invariant as a set. All homeomorphisms that act trivially on $m$ clearly commute with $c$. 

It is easy to see that $c$ is nontrivial when interpreted as an element in $\Mod(\mathcal{O})$. By considering curves that touch mirror boundary curves and lift to curves that cross the corresponding mirror boundaries in the universal covering space, we also see that all powers of $c$ are generally distinct in $\Mod(\mathcal{O})$. Note though, that there is a smallest\footnote{This is meant as having the smallest absolute value.} integer $k$ such that $c^k$ corresponds to a Dehn twist $T_\lambda$ around $m$. In particular, $T_\lambda$ in $\Mod(\mathcal{O})$ generally depends on $\lambda$ and it can happen that $T_\lambda$ has finite order, if $\lambda$ does.

We refer to any element in the group generated by $c$ associated to a mirror boundary as a \emph{mirror twist}. It is known that MCGs have solvable word problem and are finitely generated~\cite{primermcgs}. The proof, using Alexander's method, that classical MCGs have solvable word problem~\cite{primermcgs} generalizes to the case of orbifold MCGs. 

Assume we are given a generating set containing all mirror twists associated to mirror boundaries and only elements of $\Mod(O_0)$ otherwise. Then we shall refer to any homeomorphism that can be expressed without mirror twists as \emph{not containing mirror twists}. These form a subgroup $F$.

As a consequence of lemma \ref{lem:homiso}, the subgroup of $\Mod(\mathcal{O})$ that corresponds to homeomorphisms that do not include the elements $c$ for the mirror boundary components, but may permute allowed boundaries, is a finite index subgroup of the classical MCG $\Mod(O_0)$ of $O_0.$ Note that $\Mod(O_0)$ can permute all boundary components and punctures. 

We choose base points on every mirror boundary component and a set of curves with one curve for each mirror base point that bounces off the base point before going back to the base point in the orbifold, encircling nothing else. Using these, we can detect whether or not a homeomorphism that permutes boundary components includes twists or not by looking at how these curves get mapped to similar curves, and how the base point changes. Now, if an isotopy class of homeomorphisms $f$ of $O$ not containing mirror twists permutes the boundary components $m_1$ and $m_2$, each with corresponding isotopy class of generators of mirror twists $c_1$ and $c_2$ as above, then we see that $f\circ c_1=c_2\circ f$. This means that the subgroup $T$ of $\Mod(\mathcal{O})$ generated by mirror twists around the mirror boundaries, is normal in $\Mod(\mathcal{O})$. Moreover, we find that the short exact sequence 
\[
1\to T\to \Mod(\mathcal{O})\to \Mod(\mathcal{O})/T\to 1
\]
splits, since clearly $F\cap T = \{e\}$ and $\Mod(\mathcal{O})=FT$. Since the twists around the individual boundaries commute, $T$ is isomorphic to $\mathbb{Z}^s$ for some $s$ and  $\Mod(\mathcal{O})$ is a split extension of a finite index subgroup of the mapping class group of $O_0.$

Different versions of MCGs, including some classes of orbifolds, have received considerable attention in the literature. As far as we know, the above is the first instance of a discussion of elements of the MCG of $\mathcal{O}$ that act nontrivially on mirror boundaries.  

Let $\Gamma=\pi_1(\mathcal{O})$, with canonical projection map $p:\mathbb{H}^2\to\mathcal{O}=\mathbb{H}^2/\Gamma$. Consider a connected component $Z$ of $\mathbb{H}^2- p^{-1}(\Sigma)$, where $\Sigma,$ as above, denotes the singular locus of $\mathcal{O}$. Then $p:Z\to O_0$ is a non-branched and regular cover of connected topological spaces. Furthermore, $\pi_1(O_0)$ has generators $X_i$ corresponding to curves around the isolated points of the singular locus. Choose base points $z_0$ and $x_0$ for $Z$ and $O_0$ such that $p(z_0)=x_0$, so we can talk about concrete subgroups of the fundamental groups involved.

By standard covering space theory~\cite[Proposition $1.39$]{Hatcher}, the group of deck transformations $\tilde{\Gamma}$ of the regular cover $p:Z\to O_0$ and $\pi_1(O_0)$ are related by $\tilde{\Gamma}=\pi_1(O_0)/\pi_1(Z)$. Here, we interpret $\pi_1(Z)$ as a (normal) subgroup of $\pi_1(O_0)$ in the usual way, i.e. under the push forward of $p_\sharp$ as $p_\sharp(\pi_1(Z))$. Clearly, $\pi_1(Z)$ equals the normal closure in $\pi(O_0)$ of the elements $X_i^{o_i}$, where the $o_i$ are the orders of the $X_i$ in $\mathcal{O}$, since these are exactly the relations imposed on the generators of $\pi_1(Z)$ when passing over to $\pi_1(O_0)$ with $p$.  Let $f\in\Hom(\mathcal{O}),$ then, by definition, $f:O_0\to O_0$ preserves the order of branching of $p$. We therefore have that $f_\sharp(X_i^{o_i})\in \pi_1(Z).$  This is exactly the criterion~\cite[prop.\ $1.33$]{Hatcher} for the map $f\circ p$ to lift to a continuous map $f_1:Z\to Z. $ Similarly, we see that $f_1$ is actually a homeomorphism.

We will check that $f_1$ can be uniquely extended to the closure of $Z$ in $\mathbb{H}^2$ and then, if necessary by reflections, to a map $f^*$ on all of $\mathbb{H}^2$ (or a totally geodesic subspace thereof, depending on whether $\mathcal{O}$ has a boundary). Take a sufficiently small neighbourhood $U$ of one of the punctures in $Z$. Then $f_1(U)$ has infinite cyclic fundamental group, meaning it is either a punctured disk or an annulus in $\mathbb{H}^2$. The case of the annulus cannot be true, because $Z\backslash f_1(U)=f_1(Z\backslash U)$ is connected. Note for this that the annulus cannot have one boundary component be equal to a boundary component of $Z$, by construction. Thus, $f_1$ permutes the punctures. Similarly, we see that $f_1$ permutes the boundary components of $Z$ and can thus be extended to the closure of $Z$. Then, to obtain $f^*$, we impose the reflections across the boundaries of $Z$ that correspond to mirrors. That this is well-defined follows from $f_1$ being restricted to preserve the types of mirrors.
The extension is unique and the only ambiguity here stems from lifting $f$ to $f_1$, but two such lifts are related by a deck transformation in $\Gamma.$ We obtain an automorphism $\alpha$ of $\Gamma$ defined by $\alpha(\gamma):=f^*\gamma (f^*)^{-1}$. Note that $\alpha$ is defined only up to conjugation by elements in $\Gamma$. By construction, $f^*$ preserves the designated types of elements of $\Gamma$, and is therefore type-preserving. The above construction of $f^*$ is based on the ideas of~\cite{Maclachlan1975}, pages $499-500.$
     
Below we will need the following theorem, which is proved in \cite{Marden2015} for orientable orbifolds.
\begin{theorem}\label{thm:idisot}
Suppose $f\in\Hom(\mathcal{O})$ and assume that $f$ does not contain any mirror twists. Then there is a lift $f^*$ such that the induced automorphism $\alpha$ is the identity automorphism of $\pi_1(\mathcal{O})$ if and only if $f$ is homotopic in $O_0$ to the identity mapping.
\end{theorem}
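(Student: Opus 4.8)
The plan is to prove the two implications separately: the direction ``isotopic to the identity $\Rightarrow \alpha = \id$'' by a continuity-and-discreteness argument, and the converse by extending $f^*$ to the boundary circle and then constructing a $\Gamma$-equivariant isotopy down to the identity.

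First I would handle the easy direction. Suppose $f$ is isotopic in $O_0$ to the identity through a path $f_t \in \Hom(\mathcal{O})$ with $f_0 = \id$ and $f_1 = f$. Lifting this isotopy via the construction preceding the theorem gives a continuous path $f_t^*$ of homeomorphisms of $\mathbb{H}^2$, normalized so that $f_0^* = \id$. For each fixed $\gamma \in \Gamma$ the element $f_t^* \gamma (f_t^*)^{-1}$ lies in $\Gamma$ for all $t$ --- each $f_t^*$ lifts an orbifold homeomorphism and hence normalizes $\Gamma$ --- and varies continuously in $t$. Since $\Gamma$ is discrete and the path equals $\gamma$ at $t=0$, it is constant; thus $f^* \gamma (f^*)^{-1} = \gamma$ for every $\gamma$, i.e. $\alpha = \id$.

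For the converse, assume $\alpha = \id$, so that $f^*$ commutes with every $\gamma \in \Gamma$. Because $\Gamma$ acts by isometries, $f^*$ extends to a homeomorphism of $\partial \mathbb{H}^2$ commuting with the boundary action of $\Gamma$ and hence preserving the limit set $\Lambda$. The attracting and repelling fixed points of the hyperbolic and boundary-hyperbolic elements of $\Gamma$ are dense in $\Lambda$, and since $f^*$ commutes with each such $\gamma$ it sends the fixed-point pair of $\gamma$ to that of $f^*\gamma(f^*)^{-1} = \gamma$; thus $f^*$ fixes a dense subset of $\Lambda$ and so restricts to the identity on $\Lambda$. The elliptic fixed points (the branch points in $p^{-1}(\Sigma)$) are likewise fixed, since $f^*$ fixes each elliptic element and hence its center. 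I would then build a $\Gamma$-equivariant isotopy from $f^*$ to the identity: moving each $z$ a fraction of the way along the geodesic from $z$ to $f^*(z)$ is equivariant because $f^*$ commutes with the isometric $\Gamma$-action, and it fixes the branch points throughout. This descends to an isotopy of $\mathcal{O}$ fixing $\Sigma$ pointwise, which restricts to an isotopy in $O_0$ from $f$ to the identity; Lemma \ref{lem:homiso} then yields the statement at the level of $\Hom(\mathcal{O})$.

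The hard part will be the bordered and punctured case, where $\Lambda \neq \partial\mathbb{H}^2$: the density argument only controls $f^*$ on $\Lambda$, so I must argue separately on each complementary interval of $C = \partial\mathbb{H}^2 - \Lambda$ (where $f^*$ fixes the endpoints but could a priori move interior points) that $f^*$ can be isotoped to the identity without disturbing $\Lambda$ or the branch locus. The second delicate point is verifying that the equivariant isotopy genuinely runs through $\Hom(\mathcal{O})$ --- fixing weighted marked points, and carrying mirrors to mirrors and punctures to punctures at every time --- so that it is admissible in the sense defined above. For nonorientable $\mathcal{O}$ I would pass to the orientation double cover, or simply invoke the observation recorded in the excerpt that Epstein's results in \cite{Epstein1966} already hold for nonorientable surfaces.
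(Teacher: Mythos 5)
Your forward direction is fine: lifting the isotopy to a continuous path $f_t^*$ and using discreteness of $\Gamma$ to see that $t\mapsto f_t^*\gamma(f_t^*)^{-1}$ is constant is a correct argument, and is in substance what the paper does (it defers this direction to the proof of Theorem \ref{thm:mcgout}). For the converse you take a genuinely different route from the paper. The paper reduces the statement to a replacement for Lemma 1 of \cite{Marden2015} --- if $f$ preserves, up to conjugation by a fixed arc, the homotopy class \emph{in $O_0$} of every simple closed curve, then $f$ is homotopic to the identity in $O_0$ --- proved following Bers via presentations in $\mathbb{H}^2$, and then applies Lemma \ref{lem:homiso}. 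You instead work upstairs with an equivariant geodesic interpolation in the style of the standard proof of injectivity in the Dehn--Nielsen--Baer theorem for closed surfaces.

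The gap is in the last step. Your interpolation $H_t$ is $\Gamma$-equivariant and fixes $p^{-1}(\Sigma)$ pointwise, so it descends to a homotopy of the underlying space $O$ rel $\Sigma$; but it is not a homotopy in $O_0$, because the geodesic from $z$ to $f^*(z)$ may pass through branch points, i.e. $H_t(Z)\not\subseteq Z$ in general. Homotopy in $O$ rel $\Sigma$ is strictly weaker than homotopy in $O_0$: for a disc with one marked point $q$, a Dehn twist about a curve surrounding $q$ is homotopic to the identity rel $\partial D\cup\{q\}$ through maps of the disc (linear interpolation), yet it is nontrivial in the mapping class group of the marked disc; the analogous phenomenon for cone points of $\mathcal{O}$ is exactly what Theorem \ref{thm:idisot} must rule out, and equivariance of $H_t$ does not by itself prevent tracks of points from crossing the branch locus. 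So the assertion that the descended homotopy ``restricts to an isotopy in $O_0$'' is precisely where the content of the theorem sits, and it is the step the Marden/Bers curve-based argument is designed to handle (converting triviality of conjugation on the deck group $\Gamma=\pi_1(O_0)/\pi_1(Z)$ into control of $f_\sharp$ on $\pi_1(O_0)$ itself). Two further points you flag but do not close: the continuous extension of $f^*$ to $\partial\mathbb{H}^2$ is not automatic when $\mathcal{O}$ is noncompact, since the lift need not be a quasi-isometry near cusps or funnels; and the behaviour of $f^*$ on the complementary intervals of the limit set in the bordered case is left entirely open. As written, the proposal is an outline of a plausible alternative strategy rather than a proof.
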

We will see below in the proof of theorem \ref{thm:mcgout} that mappings of $O$ that are homotopic in $O_0$ and on mirror boundaries yield the same automorphism of $\Gamma$, which deals with one direction. The proof of the other direction requires careful study of the proof in \cite{Marden2015}. The proof works in exactly the same way as presented there, but we need to exchange one of the key ingredients. The following lemma replaces lemma $1$ in \cite{Marden2015}.
\begin{lemma}\label{lem:closedcurvesiso}
Suppose $S$ is a Riemann surface (possibly obtained, like $O_0$, from a surface with features $O$) and $g$ is a homeomorphism of $S$. Let $\alpha$ be a simple closed curve based at $0$ that is disjoint from the boundary. If there exists an arc $c$ from a point $0 \in S$ to $g(0)$ such that $\alpha$ is homotopic to $cg(\alpha)c^{-1}$ for all such $\alpha$, then $g$ is homotopic in $S$ to the identity.
\end{lemma} 
In \cite{Bers1960}, the statement of lemma \ref{lem:closedcurvesiso} for orientable, closed surfaces is proved using hyperbolic geometry of the surface in $\mathbb{H}^2.$ All of the arguments used there also work for the surfaces with features that we study, as long as we keep in mind the following. First, the construction of the lift of a map $f$ in \cite[p.\ 20]{Bers1960} has to be replaced by the construction of $f^*$ given above. Secondly, the orbifold fundamental group $\pi_1(\mathcal{O})$ is generated by based simple closed curves in $O_0$, similar to the classical case. Note that by isotoping appropriately, we can assume that the basepoint is fixed under homeomorphisms of $O_0.$ Lemma \ref{lem:homiso} then yields the statement of theorem \ref{thm:idisot}.

Note also that using the arguments found in \cite[p.\ 152]{Zieschang1966}, one can prove that one obtains orientation-reversing automorphisms of a group by orientation-reversing lifts of orientation-reversing homeomorphisms.

The MCG of a space is often studied by looking at the action of the homeomorphism classes on isotopy classes of curves. For example, let $\mathcal{O}$ be an orbifold with symmetry group $G:=\pi_1(\mathcal{O})=2222a$, with $a\ge 2$, then $\Mod(\mathcal{O})$ is one of two different types of groups. If $a=2$, $\Mod(\mathcal{O})=\Mod(S_5)$, the usual MCG of the $5$-punctured sphere with punctures $p_1,...,p_5$ corresponding to the fixed points of hyperbolic rotations $r_1,...,r_5$. If $a>2$, $\Mod(\mathcal{O})$ is the subgroup of $\Mod(S_5)$ corresponding to those homeomorphism classes that fix the conical singularity $a$. The set of elements of finite order in $G$ is characterisic, i.e. preserved as a set under automorphisms. If, moreover, an automorphism $\alpha:G\to G$ is type-preserving and orientation preserving (see below for an algebraic definition of orientation preserving), $\alpha(r_i)=tr_jt^{-1}$ for some $t\in G$~\cite{Zieschang1966}. It is impossible that this kind of transformation sends an elliptic transformation to a nontrivial power of itself. Indeed, assume that $r^d=trt^{-1}$ for some $d>1$. Then $r^{d-1}=trt^{-1}r^{-1}=[t,r]$ is elliptic. However, the commutator of an elliptic transformation with any other transformation cannot be elliptic~{\cite[pp.\ 191-193]{Friedrich2008}}. Therefore, mapping an elliptic transformation to a conjugate of a nontrivial power of itself can never yield an automorphism of the whole orbifold group, even if it does yield one of the local group. This generalizes an observation made in \cite{evansper1} and \cite{evansper2}, and discussed in more detail and illustrated with pictures in \cite{myfphd}, whereby the placement of rotational centers for generators in certain domains of $\mathbb{H}^2$ is prohibited. 

As mentioned above, automorphisms of $\pi_1(\mathcal{O}),$ where $\mathcal{O}$ is an orientable orbifold, can be assigned an orientation with the expected property that all orientation preserving automorphisms form a subgroup of index $2$ in all automorphisms~\cite{Zieschang1966}. Moreover, orientation reversing automorphisms are exactly those automorphisms that change the left hand side of the global relation \eqref{eq:globalrel} to a conjugate of its inverse, whereas orientation preserving automorphisms map it to a conjugate of itself. We denote with $\Aut^+(\pi_1(\mathcal{O}))$ the subgroup of orientation and type-preserving automorphisms, which contains all inner automorphisms, and with $\Out^+(\pi_1(\mathcal{O}))$ the corresponding subgroup of outer automorphisms. The well-known Dehn-Nielsen-Baer theorem~\cite[Theorem $8.1$]{primermcgs} can be generalized~\cite{Maclachlan1975} to show that 
\[\Mod(\mathcal{O})\cong \Out^+(\pi_1(\mathcal{O})).\] We will prove the following, in much the same way, by providing an explicit isomorphism. We use the ideas outlined in a different context in a similar proof~\cite{fujiwara} as inspiration. 
\begin{theorem}\label{thm:mcgout}
Let $\mathcal{O}$ be a nonorientable hyperbolic orbifold, with nonorientable underlying topological space. Then the MCG $\Mod(\mathcal{O})$ defined above is isomorphic to $\Out_t(\pi_1(\mathcal{O}))$, the group of type-preserving outer automorphisms. If $\mathcal{O}$ is orientable, possibly containing mirrors, then the orientable MCG $\Mod(\mathcal{O})$ is isomorphic to $\Out^+(\pi_1(\mathcal{O}))$, the group of orientation and type-preserving automorphisms.
\end{theorem}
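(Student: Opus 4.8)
The plan is to construct an explicit homomorphism $\Phi:\Mod(\mathcal{O})\to\Out_t(\pi_1(\mathcal{O}))$ (respectively $\Out^+$) and prove it is an isomorphism, following the template of the Dehn--Nielsen--Baer theorem as in \cite{Maclachlan1975}, but adapted to the nonorientable and bordered/punctured setting via the lifting machinery already developed in this section. First I would define the map. Given $[f]\in\Mod(\mathcal{O})$, choose a representative $f\in\Hom(\mathcal{O})$; the construction preceding Theorem~\ref{thm:idisot} lifts $f$ (via the intermediate lift $f_1$ on a component $Z$ of $\mathbb{H}^2-p^{-1}(\Sigma)$ and its extension to $f^*$ on all of $\mathbb{H}^2$) to an automorphism $\alpha$ of $\Gamma=\pi_1(\mathcal{O})$ by $\gamma\mapsto f^*\gamma(f^*)^{-1}$. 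I would set $\Phi([f]):=[\alpha]\in\Out_t(\pi_1(\mathcal{O}))$.

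The verification then breaks into the standard four checks. \emph{Well-definedness} has two layers: the ambiguity in the lift $f^*$ is exactly a deck transformation in $\Gamma$ (as already noted when $\alpha$ is constructed), so changing the lift changes $\alpha$ by an inner automorphism, hence $[\alpha]$ is unaffected; and changing the representative $f$ within its isotopy class leaves $[\alpha]$ fixed, which I would deduce by first showing (as the excerpt promises) that homotopic maps in $O_0$ induce the same $\alpha$, then invoking Lemma~\ref{lem:homiso} to pass between isotopy and homotopy. \emph{Homomorphism} is routine from functoriality of lifting: $(f\circ h)^*$ agrees with $f^*\circ h^*$ up to a deck transformation. \emph{Injectivity} is precisely Theorem~\ref{thm:idisot} together with Lemma~\ref{lem:homiso}: if $\Phi([f])$ is trivial then $\alpha$ is inner, so after adjusting the lift $\alpha=\id$, whence $f$ is isotopic to the identity in $O_0$. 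I would also need to confirm that the image genuinely lands in the \emph{type preserving} (and in the orientable case, \emph{orientation preserving}) automorphisms: a homeomorphism in $\Hom(\mathcal{O})$ preserves the weights of cone points, sends mirrors to mirrors, punctures to punctures, and other boundary components among themselves, and these features correspond exactly to the types of elements of $\Gamma$ defined in Section~\ref{sec:orbs}, so $\alpha$ is type preserving; orientation is handled by the algebraic orientation notion of \cite{Zieschang1966}.

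The main obstacle I expect is \emph{surjectivity}, which is the part where the nonorientable and non-cocompact features genuinely depart from the classical statement. Given a type preserving automorphism $\alpha$ of $\Gamma$, I must realize it geometrically, i.e.\ produce $f\in\Hom(\mathcal{O})$ with $\Phi([f])=[\alpha]$. The classical route realizes $\alpha$ as an isometry of $\mathbb{H}^2$ conjugating $\Gamma$ to $\alpha(\Gamma)=\Gamma$ and descends it to $\mathcal{O}$; here the technical work is that type preservation is exactly the hypothesis guaranteeing the two representations $\Gamma\hookrightarrow\Iso(\mathbb{H}^2)$ and $\alpha$-twisted one lie in the same component of Teichm\"uller space (so they are conjugate, or connected through conjugate representations, by the homeomorphism $T(G)\cong\mathbb{R}^k$ discussed in Section~\ref{sec:ddtheory}), and that the conjugating map is compatible with the singular locus $\Sigma$ so that it descends to a genuine feature-preserving homeomorphism of $O$ rather than merely $O_0$. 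I would argue that type preservation forces the boundary, puncture, and cone-point structure to be matched, and that the extension of $f_1$ across $\Sigma$ constructed earlier (using that $f^*$ permutes punctures and fixes the branching orders) runs equally in reverse. The nonorientable case requires only that one drop the orientation constraint and allow $\Out_t$ rather than $\Out^+$, since the orientation-reversing classes are now admissible; I would remark that Maclachlan--Harvey's arguments in \cite{Maclachlan1975} and Epstein's results in \cite{Epstein1966}, on which Lemma~\ref{lem:homiso} already rests, are valid for nonorientable surfaces, so no new difficulty arises there beyond bookkeeping of the index-two orientation subgroup.
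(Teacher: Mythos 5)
Your construction of the map, the two-layer well-definedness check, the homomorphism property, and the injectivity argument via Theorem~\ref{thm:idisot} and Lemma~\ref{lem:homiso} all match the paper's proof in substance. The gap is in surjectivity, which is exactly the step the paper identifies as the real content. You write that type preservation guarantees the inclusion $\Gamma\hookrightarrow\Iso(\mathbb{H}^2)$ and its $\alpha$-twist ``lie in the same component of Teichm\"uller space (so they are conjugate, or connected through conjugate representations)''. That inference is false as stated: two representations in the same component of $T(G)$ are almost never conjugate --- if path-connectedness implied conjugacy, $T(G)$ would be a single point rather than $\mathbb{R}^k$. What surjectivity actually requires is a \emph{homeomorphism} (not an isometry) $h$ of $\mathbb{H}^2$ with $h\gamma h^{-1}=\alpha(\gamma)$ for all $\gamma\in\Gamma$, and producing such an $h$ is a theorem in its own right, not a formal consequence of the topology of $T(G)$. (One could try to salvage a path-based argument via the Ehresmann--Thurston holonomy principle, i.e.\ local conjugacy of nearby discrete faithful representations by near-identity equivariant homeomorphisms, but you neither invoke this nor supply a substitute.)

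The paper fills this hole with a specific chain of results: Macbeath's geometric realization theorem for cocompact NEC groups, extended to finite-area orbifolds via the existence and uniqueness of extremal quasi-conformal mappings (Abikoff); the positive resolution of the Fenchel conjecture, to pass to a finite-index torsion-free subgroup which is the fundamental group of a possibly punctured and bordered orientable surface; and the Dehn--Nielsen--Baer theorem for such surfaces (Theorem~8.8 of \cite{primermcgs}), which is precisely where the hypothesis of type preservation enters as the necessary and sufficient condition for geometric realization. Your proposal correctly anticipates \emph{why} type preservation should be the right hypothesis (it matches punctures, boundary components and cone points), but it does not contain the mechanism that converts a type preserving automorphism into a fiber-preserving homeomorphism of $\mathbb{H}^2$; without Macbeath's theorem or an equivalent (quasiconformal deformation theory plus the passage to a surface subgroup), the surjectivity half of the isomorphism is not established.
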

\begin{proof}
The orientable case without mirrors has already been proven~\cite{Maclachlan1975}, so we focus on the other cases. Define a morphism $\varphi: \Mod(\mathcal{O})\to \Out_t(G)$ by $\varphi(f)(\gamma):=f^*\gamma(f^*)^{-1}$ for $\gamma\in G:=\pi_1(\mathcal{O})$, where $f^*$ is the lift of $f$ defined above. Notice that the ambiguity of $f_1$ in the construction of $f^*$ means that $\varphi$ is only defined up to inner automorphisms. Two isotopic maps in $\Hom(\mathcal{O})$ yield the same image in $\Out_t(G)$, so $\varphi$ is well-defined on isotopy classes. Indeed, assume that $f$ is isotopic to the identity and fixes a point 
$x_0\in O_0$. The idea is that an isotopy of $f$ gives rise to a path of homeomorphisms, which defines a path of automorphisms, as follows. Consider the closed loop $\lambda:[0,1]\to O$ based at $x_0$ that corresponds to the path that $x_0$ takes under the given isotopy $\varphi_t$. Then, by mapping a closed loop $l$ based at $x_0$ to the concatenation of first following $\lambda$ until the point $\varphi_t(x_0)$, then going around the deformed curve $\varphi_t(l)$ and back to $x_0$ along $\lambda$ we see that because $G$ and therefore its automorphism group is discrete, the image in $G$ must be constant, so the induced automorphism is inner.

For mirrors, note that mirror twists are isotopic if and only if they induce the same image in $\Out_t(G)$, because they are defined in terms of homotopy classes of simple closed curves that touch the mirror boundaries. Indeed, firstly, \cite[Sections $1.2.6$ and $1.2.7$]{primermcgs} contains a discussion of how to upgrade homotopies of simple closed curves and arcs to isotopies. These isotopies can be further upgraded to smooth isotopies~\cite{Boldsen2009}, from which \cite[Chapter $8$, theorem $1.3$]{Hirsch1976} yields the result that we can extend such isotopies to isotopies of the whole surface. Note also that mirror twists are supported in neighbourhoods of boundaries, and can be applied appropriately after applying other homeomorphisms first without changing the result.  Note, moreover, that we can apply isotopies to all other homeomorphisms before applying any isotopies to mirror twists, without changing the result. All in all, we have that $\varphi$ is indeed well-defined on isotopy classes of homeomorphisms, concluding, in particular, the proof of the missing direction of theorem \ref{thm:idisot}.

In \cite[Theorem 3]{Macbeath1967}, it is proved that any automorphism of a hyperbolic orbifold group with compact codomain is realized geometrically, i.e. induced by a homeomorphism of $\mathbb{H}^2.$ The proof there can be extended to finite area orbifolds using the uniqueness and existence of an extremal quasi-conformal mapping within an isotopy class of homeomorphisms of the hyperbolic plane as given in \cite[p.\ 59, Theorem 2]{Abikoff1980}. The only difference in the proof then is that instead of reducing to the case of a compact surface by passing over to a finite index subgroup, by the positive resolution of the Fenchel conjecture in \cite{fox52,fconjecture,Chau}, we pass over to the fundamental group of a possibly punctured and bordered orientable surface. This means that instead of every automorphism being realized geometrically as in the compact case, we obtain the statement that only the type preserving ones are realized, as this is the case for surfaces with boundaries and punctures. This last statement, instead of using the original Dehn-Nielsen-Baer theorem for compact surfaces, employs theorem $8.8$ from \cite{primermcgs} instead, which on account of us allowing homeomorphisms that are not the identity on the boundary holds for surfaces with boundary as well, as long as the automorphisms considered are type preserving.

We thus conclude that all type preserving automorphisms of $G$ are realised geometrically and therefore $\varphi$ is surjective. 

For injectivity, assuming that $f$ lifts to a homeomorphism $f^*$ that induces an inner automorphism, there is a lift of $f^*$ that is the identity automorphism on $G.$ Along with the above discussion of mirror twists, theorem \ref{thm:idisot} concludes the proof.
\end{proof}
While theorem \ref{thm:mcgout} is an important result, it is as of yet unclear how to use this isomorphism in general for practical purposes. The same proof holds in the Euclidean case, where the surjectivity of the homomorphism is true for the same basic reasons that it is true for hyperbolic orbifolds.\footnote{The only difference is that the theorems on quasi-conformal maps in the proof of the theorem turn into statements about affine maps.}

From the proof of theorem \ref{thm:mcgout} and the fact that geometric automorphisms are type-preserving, we also obtain the following.
\begin{proposition}\label{prop:comporbs}
For a compact orbifold $\mathcal{O}$ without boundary hyperbolic elements and punctures, every automorphism of $\pi_1(\mathcal{O})$ is realized geometrically, so the MCG $\Mod(\mathcal{O})$ is isomorphic to either the group of all outer automorphisms of $\pi_1(\mathcal{O})$ or just the orientation preserving ones, depending on whether or not $\mathcal{O}$ is orientable.
\end{proposition}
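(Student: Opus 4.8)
The plan is to obtain this as a direct corollary of Theorem~\ref{thm:mcgout}, by showing that in the compact case the type-preserving restriction on the target group becomes vacuous. The only new input needed is the observation, recorded just before the statement, that every geometric automorphism is type preserving, combined with the fact that in the compact setting \emph{every} automorphism is geometric.

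First I would invoke the compact case of Macbeath's realization theorem \cite[Theorem 3]{Macbeath1967}, already used inside the proof of Theorem~\ref{thm:mcgout}: for a hyperbolic orbifold group $\Gamma = \pi_1(\mathcal{O})$ with compact quotient, every automorphism $\alpha \in \Aut(\Gamma)$ is induced by a homeomorphism $f^*$ of $\mathbb{H}^2$ normalizing $\Gamma$, i.e. $\alpha(\gamma) = f^*\gamma(f^*)^{-1}$. The point is that, unlike in the finite-area case treated in Theorem~\ref{thm:mcgout} (where the extension to cusped and bordered orbifolds forced the restriction to type-preserving automorphisms), here there are no parabolic or boundary-hyperbolic conjugacy classes present, so Macbeath applies without any type hypothesis.

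Next I would argue that such a geometric $\alpha$ is automatically type preserving. Conjugation by the homeomorphism $f^*$ preserves the existence and dimension of fixed-point sets in $\mathbb{H}^2$ and, crucially, respects the orientation-preserving versus orientation-reversing dichotomy of isometries: if $g$ is an orientation-preserving isometry then so is $f^*g(f^*)^{-1}$, regardless of whether $f^*$ itself reverses orientation. Since in the compact case the only types are finite-order elliptics, order-two reflections, orientation-preserving hyperbolics (from handles), and orientation-reversing hyperbolics (from crosscaps), all distinguished by their order together with this orientation behaviour, the induced automorphism sends each generator to an element of the same type. Hence $\Aut_t(\Gamma) = \Aut(\Gamma)$ and consequently $\Out_t(\Gamma) = \Out(\Gamma)$; in the orientable case the orientation-and-type-preserving group $\Out^+(\Gamma)$ then coincides with the group of merely orientation-preserving outer automorphisms.

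Finally I would feed this back into Theorem~\ref{thm:mcgout}. In the nonorientable case that theorem gives $\Mod(\mathcal{O}) \cong \Out_t(\Gamma)$, which by the above equals the full outer automorphism group $\Out(\Gamma)$; in the orientable case it gives $\Mod(\mathcal{O}) \cong \Out^+(\Gamma)$, which now reads as the orientation-preserving outer automorphisms, as claimed. I do not expect a genuine obstacle here, since the entire content is the interplay between Macbeath's unrestricted geometric realization (valid precisely because compactness removes punctures and boundary) and the type-preservation of geometric maps. The one point requiring a little care is verifying that conjugation by a possibly orientation-reversing $f^*$ cannot interchange, say, an order-two elliptic with a reflection; this is handled by the orientation-parity remark above, together with the fact that element orders are algebraic invariants already preserved by any automorphism.
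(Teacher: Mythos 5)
Your proposal is correct and follows essentially the same route as the paper, which derives the proposition from the proof of Theorem~\ref{thm:mcgout} (specifically the compact case of Macbeath's realization theorem) together with the fact that geometric automorphisms are type preserving. Your added detail on why conjugation by a possibly orientation-reversing lift cannot change the type of an element is a reasonable elaboration of the "fact" the paper merely cites, but it does not change the argument.
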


Summarizing what this means for tiling theory, recall that we obtain tilings from a given set of generators of the symmetry group with a fixed method by the previous section \ref{sec:ddtheory}. We now know that two tilings produced in this way for a fixed method with a fixed set of generators yield isotopic tilings. This is because the associated outer automorphism of the symmetry group that fixes the set of generators must be trivial by theorem \ref{thm:mcgout}. Moreover, if two sets of generators yield the same tilings regardless of the fixed method for producing the tilings, they will be conjugate in $\Gamma$. The last statement follows simply because if two sets of generators are distinct and not conjugate, then we can decorate the orbifold's quotient space appropriately with closed loops corresponding to generators and see that some of these must be changed non-trivially when applying a non-trivial element of $\Mod(\mathcal{O})$.

Since combinatorial tiling theory is phrased in terms of equivariant tilings, with a prescribed symmetry group, a natural question is how the isotopy classes of tilings symmetric w.r.t.\ one symmetry group behave in relation to similar tilings that are symmetric w.r.t.\ to another symmetry group. In particular, it is essential for our purposes to consider whether it is possible that two isotopically distinct tilings on $\mathcal{O}$ yield isotopically distinct tilings of a finite covering space. For EPINET, it is important to consider whether or not it is possible that two isotopically distinct decorations of an orbifold yield isotopically equivalent tilings of the TPMS. It turns out that this is impossible. In the next section we will study related questions in some detail, by studying lifts of elements of the MCG to a covering space. 

\section{Lifts of Mapping Class Groups}\label{sec:bhtheory}
In an effort to relate the MCGs of some surfaces to the MCGs of covers of the surface, Birman-Hilden theory was introduced~\cite{Birman1973,Birman1972}. The idea is the following. Given a covering map $p:S \to X$ of surfaces, one may look at \emph{fiber-preserving} homeomorphisms $f:S \to S$ that for all $x\in X$ map the fibers $p^{-1}(x)$ to $p^{-1}(y)$ for some $y\in X$. If this is the case, then $f$ induces a homeomorphism on $X$. Conversely, if a homeomorphism $f$ on $X$ lifts to a homeomorphism $\tilde{f}$ on $S$, $\tilde{f}$ must be fiber-preserving. If for any two fiber-preserving homeomorphisms on $S$ that are homotopic as maps on $S$, there is a homotopy passing only through fiber-preserving homeomorphisms, then we say that $p$ has the \emph{Birman-Hilden property}. The importance of this notion is that the MCGs for surfaces are defined through homotopies and in order to relate the MCGs of both spaces, it is useful to know when only isotopic homeomorphisms of $X$ lift to isotopic homeomorphisms of $S$. 

As such, Birman-Hilden theory concerns itself with maps induced on $X$ by isotopy classes of maps on $S$. This leaves open the question of the existence of a lift of a representative of an isotopy class of maps. We will also investigate the question of existence of lifts of homeomorphisms of orbifolds to their covering spaces. It is known that if $p$ is a finite-sheeted branched regular covering map of orbifolds, then $p$ has the Birman-Hilden property~\cite[Theorem $11.1$]{Zieschang1973}. We will derive this result somewhat differently. The following discussion is cast for hyperbolic orbifolds. For an assessment of the Euclidean case, refer to \cite[$\S 9$]{Zieschang1973}.

Let $p:\mathcal{O}_1\to \mathcal{O}$ be a covering map of orbifolds. As usual, we present the hyperbolic orbifold $\mathcal{O}$, possibly with punctures and non-empty boundary, as the quotient of (a totally geodesic subspace of) $\mathbb{H}^2$ by $\Gamma$, where $\Gamma=\pi_1(\mathcal{O})$ is a discrete subgroup of $\Iso(\mathbb{H}^2)$. We have that $\mathbb{H}^2\to \mathbb{H}^2/\Gamma$ is a regular branched cover, where the branch locus is a (possibly non-discrete) nowhere dense set in $\mathcal{O}$. Similarly, we have $\mathcal{O}_1=\mathbb{H}^2/\Gamma_1$ and we naturally have $\Gamma_1\subset\Gamma,$ with each of these groups acting as a group of deck transformations on the universal cover, see also section \ref{sec:orbs}. We are only interested in finite covers, which translates to $\Gamma_1$ having finite index in $\Gamma$, equal to the degree of $p$. For closed orientable surfaces, it is well-known that any finite index subgroup of the fundamental group is isomorphic to the fundamental group of a covering surface, whereas any infinite index subgroup is free~\cite[Sections $4.2.2$ and $4.3.7$]{Stillwell1993}. 

We will start the subsequent discussion with results whose proofs do not, as far as we know, appear in the literature but can be carried out with well-known methods in the field. 

Homeomorphisms of $\mathbb{H}^2$ satisfying $f\Gamma f^{-1}=\Gamma$ yield geometric automorphisms of $\Gamma$, by definition. This is equivalent to $f$ inducing a homeomorphism of the orbifold $\mathbb{H}^2/\Gamma.$ On the other hand, any homeomorphism $f\in \Hom(\mathcal{O})$ of the orbifold $\mathcal{O}=\mathbb{H}^2/\Gamma$ lifts to a homeomorphism $\tilde{f}$ of the universal covering space $p:\mathbb{H}^2\to\mathbb{H}^2/\Gamma$ that is $\Gamma$ fiber-preserving. To see this, refer to the discussion in the previous section where we explicitly construct such lifts. Recall, furthermore, that theorem \ref{thm:mcgout} implies that geometric automorphisms are exactly those that are type-preserving.
\begin{corollary}\label{cor:geoautsubgr}
A geometric automorphism of an orbifold group $\Gamma$ that induces an automorphism on an orbifold subgroup $S\subset \Gamma$ induces a geometric automorphism on $S$.
\end{corollary}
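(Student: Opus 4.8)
The plan is to realize the restriction $\alpha|_S$ by the \emph{same} homeomorphism of $\mathbb{H}^2$ that realizes $\alpha$, and then to invoke Theorem~\ref{thm:mcgout} only to extract the meaningful consequence. By Definition~\ref{def:geomauto}, the hypothesis that $\alpha$ is geometric furnishes a homeomorphism $f$ of $\mathbb{H}^2$ with $f\Gamma f^{-1}=\Gamma$ and $\alpha(\gamma)=f\gamma f^{-1}$ for all $\gamma\in\Gamma$. The statement that $\alpha$ induces an automorphism on $S$ means precisely that $\alpha(S)=S$, i.e.\ $fSf^{-1}=S$. So the first step is simply to record that the homeomorphism already in hand normalizes $S$ as well as $\Gamma$, and that $\alpha|_S(s)=fsf^{-1}$ for every $s\in S$.

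Next I would check that $f$ is $S$ fiber-preserving with respect to the universal orbifold covering $\mathbb{H}^2\to\mathbb{H}^2/S$, whose deck group is $S$. Since $fSf^{-1}=S$, the map $f$ carries $S$-orbits to $S$-orbits, and it respects the branch locus of this cover because $f(\mathrm{Fix}(s))=\mathrm{Fix}(fsf^{-1})$ with $fsf^{-1}\in S$ of the same order as $s$. Hence $f$ descends to a self-homeomorphism of $\mathbb{H}^2/S$ and realizes $\alpha|_S$ by conjugation, so that $\alpha|_S$ is a geometrical automorphism of $S$ directly by Definition~\ref{def:geomauto}, which is the assertion of Corollary~\ref{cor:geoautsubgr}. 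The role of Theorem~\ref{thm:mcgout} is then to upgrade this to the useful statement that $\alpha|_S$ represents a genuine class in $\Mod(\mathbb{H}^2/S)$ and, in particular, is type preserving for $S$.

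The point I expect to require the most care---and the reason the result is phrased as a corollary of Theorem~\ref{thm:mcgout} rather than of the definition alone---is the following. The \emph{type} of an element is defined relative to the ambient group, for instance through the connected components of the complement of its limit set, so passing from $\Gamma$ to the smaller group $S$ changes the limit set and hence, a priori, the types of elements. There is therefore no direct route deducing ``$\alpha|_S$ type preserving for $S$'' from ``$\alpha$ type preserving for $\Gamma$'', and any attempt to argue type-by-type would stall precisely here. The homeomorphism-based argument above circumvents this entirely, and the equivalence ``geometric $\Longleftrightarrow$ type preserving'' supplied by Theorem~\ref{thm:mcgout}, applied now to the orbifold $\mathbb{H}^2/S$, delivers type preservation for $S$ as a byproduct. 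The only remaining technical checks are that the descent of $f$ genuinely respects the orbifold features of $\mathbb{H}^2/S$ (mirrors, punctures, boundary components), which is immediate since $f$ is an honest homeomorphism of $\mathbb{H}^2$ conjugating $S$ to itself, and that the finite-area case with punctures and boundary is covered by the same extensions used in the proof of Theorem~\ref{thm:mcgout}.
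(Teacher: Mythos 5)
Your proof is correct and is essentially the argument the paper intends (the corollary is stated there without a printed proof): the homeomorphism $f$ of $\mathbb{H}^2$ realizing $\alpha$ satisfies $fSf^{-1}=S$ precisely because $\alpha$ restricts to an automorphism of $S$, hence $f$ carries $S$-orbits to $S$-orbits and realizes $\alpha|_S$ in the sense of Definition~\ref{def:geomauto}, with Theorem~\ref{thm:mcgout} then supplying type preservation for $S$ as a byproduct. Your remark that the type of an element is defined relative to the ambient group, so that a purely algebraic ``restrict type preservation from $\Gamma$ to $S$'' argument would stall, correctly identifies why the homeomorphism-based route is the right one.
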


For the following theorem we will mostly follow the proof of theorem $8.2$ in \cite{Zieschang1973}, but produce a stronger result.  
\begin{theorem}\label{thm:submcgs}
Let $G$ be the symmetry group of a hyperbolic orbifold $\mathcal{O}$ and $G_1\subset G$ a subgroup of finite index. Then a geometric automorphism $\alpha$ of $G_1$ is induced by a $G$ fiber-preserving homeomorphism of $\mathbb{H}^2$ iff $\alpha$ is induced by an automorphism $\hat{\alpha}$ of $G$. 
\end{theorem}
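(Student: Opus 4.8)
The plan is to treat the two implications separately, using Theorem~\ref{thm:mcgout} as the dictionary between type-preserving automorphisms and homeomorphisms that normalize the group. The forward implication is immediate. If $\alpha$ is induced by a $G$ fiber-preserving $f\in\Hom(\mathcal{O})$, then by definition its lift $f^*$ to $\mathbb{H}^2$ satisfies $f^*G(f^*)^{-1}=G$, so $\hat\alpha(\gamma):=f^*\gamma(f^*)^{-1}$ is an automorphism of $G$; since $f^*$ induces $\alpha$ on $G_1$ we have $\hat\alpha|_{G_1}=\alpha$, and $f^*G_1(f^*)^{-1}=\alpha(G_1)=G_1$ shows that $f^*$ is automatically $G_1$ fiber-preserving. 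Thus $\hat\alpha$ is the required extension.

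For the converse, I would reduce the whole statement to the claim that the given extension $\hat\alpha\in\Aut(G)$ is type preserving, i.e. geometric in the sense of Definition~\ref{def:geomauto}. Granting this, the surjectivity part of the proof of Theorem~\ref{thm:mcgout} (Macbeath together with the extremal quasiconformal realization) yields a homeomorphism $f$ of $\mathbb{H}^2$ with $fGf^{-1}=G$ and $f\gamma f^{-1}=\hat\alpha(\gamma)$ for all $\gamma\in G$. This $f$ descends to an element of $\Hom(\mathcal{O})$, is $G$ fiber-preserving by construction, and restricts on $G_1$ to $\hat\alpha|_{G_1}=\alpha$ with $fG_1f^{-1}=\alpha(G_1)=G_1$, so $\alpha$ is induced by a $G$ fiber-preserving homeomorphism as claimed. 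The hypothesis that $G_1$ is not cyclic is what guarantees that $\mathcal{O}_1=\mathbb{H}^2/G_1$ is a genuine hyperbolic orbifold to which the realization machinery of Theorem~\ref{thm:mcgout} and Corollary~\ref{cor:geoautsubgr} applies, rather than a degenerate elementary quotient.

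To establish that $\hat\alpha$ is type preserving I would use the finite index $m:=[G:G_1]$ together with the fact that $\alpha=\hat\alpha|_{G_1}$ already preserves types. Orders are preserved by any automorphism, so torsion maps to torsion of equal order and infinite-order elements to infinite-order elements; in particular an elliptic of order $\ge 3$ goes to an elliptic of the same order, since a finite-order isometry of $\mathbb{H}^2$ of order $\ge 3$ is necessarily orientation preserving. For infinite-order $g\in G$, let $k\le m$ be least with $g^k\in G_1$; then $\hat\alpha(g)^k=\alpha(g^k)$ has, by hypothesis, the same parabolic/hyperbolic/boundary-hyperbolic character as $g^k$. Since this character agrees for an isometry and all of its nontrivial powers — a root shares the fixed points at infinity, and boundary-hyperbolicity is a property of the invariant axis and the complementary limit-set component it preserves — it transfers back to $\hat\alpha(g)$ and matches that of $g$. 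Hence the fixed-point type is preserved in every case.

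The one genuinely delicate point, and the step I expect to be the main obstacle, is the orientation character $w\colon G\to\mathbb{Z}/2$, which is what separates order-$2$ elliptics from mirrors and orientation-preserving hyperbolics from glide reflections. The power argument transmits the value of $w$ only through odd exponents and loses it when the minimal $k$ above is even — exactly the case of an orientation-reversing $g$ whose relevant power lands in the orientation-preserving part of $G_1$ — and the analogous cohomological transfer argument also fails precisely when $[G:G_1]$ is even. To close this gap I would appeal to Zieschang's algebraic theory of orientation for planar discontinuous groups~\cite{Zieschang1966}, which makes orientation an intrinsic, automorphism-respected invariant and forces $w\circ\hat\alpha=w$; combined with the order and fixed-point data above this shows that $\hat\alpha$ preserves every type. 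The bookkeeping needed to reconcile the algebraic orientation of $G$ with that of $G_1$ across the finite-index inclusion is the part requiring the most care, and is precisely where we sharpen the argument of Theorem~$8.2$ in~\cite{Zieschang1973}.
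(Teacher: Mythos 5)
Your forward implication is fine and agrees with the paper. For the converse, however, your reduction to ``$\hat\alpha$ is type preserving'' places the entire weight of the theorem on exactly the step you cannot close, and the proposed fix does not work. The claim that Zieschang's algebraic orientation theory ``forces $w\circ\hat\alpha=w$'' for an arbitrary automorphism $\hat\alpha$ of $G$ is false in the generality of this paper: for an orbifold with boundary or punctures the group $G$ can be free, and a free NEC group admits automorphisms that send an orientation-reversing (one-sided) generator to an orientation-preserving (boundary) one. What \cite{Zieschang1966} supplies is an orientation \emph{of} an automorphism and the good behaviour of \emph{type preserving} automorphisms --- i.e.\ precisely the conclusion you are trying to establish, not a tool for establishing it. Nor can the hypothesis $w\circ\alpha=w$ on $G_1$ be pushed up to $G$: two homomorphisms $G\to\mathbb{Z}/2$ agreeing on a finite-index subgroup need not agree on $G$ (the transfer is multiplication by $[G:G_1]$ and vanishes for even index), which is the same failure you already noted for the power trick. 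The order-$2$ torsion case is worse still: a half-turn and a mirror reflection both have order $2$, and if no conjugate of such an element lies in $G_1$, your power argument gives no constraint at all on its image under $\hat\alpha$. So the type preservation of $\hat\alpha$ --- which is in fact true --- is not proved by your argument; as far as I can see it is essentially equivalent to the theorem itself.

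The paper avoids all of this with a rigidity argument that you should compare against. One passes to a torsion-free, orientable, finite-index normal subgroup $N\trianglelefteq G$ (Fenchel's conjecture) contained in $G_1$, takes $h$ to be the \emph{unique extremal quasiconformal} homeomorphism of $\mathbb{H}^2$ with $h n h^{-1}=\alpha(n)$ for all $n\in N$, and for arbitrary $g\in G$ forms $\varphi=\hat\alpha(g)\,h\,g^{-1}$. Normality of $N$ and the identity $\hat\alpha(g)\alpha(g^{-1}ng)=\alpha(n)\hat\alpha(g)$ give $\varphi n\varphi^{-1}=\alpha(n)$ as well; since pre- and post-composition with isometries preserves the dilatation, uniqueness of the extremal map forces $\varphi=h$, hence $\hat\alpha(g)=hgh^{-1}$ for every $g\in G$ simultaneously. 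Geometricity --- and with it type preservation --- of $\hat\alpha$ falls out as a consequence rather than being needed as an input, with no case analysis on element types or orientation characters. To keep your architecture you would need a global argument of this kind to prove $w\circ\hat\alpha=w$ anyway, at which point you have reproduced the paper's proof.
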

\begin{proof}
If $\alpha$ is induced by a $G$ fiber-preserving homeomorphism $f$, then $f$ induces a homeomorphism on the orbifold $\mathbb{H}^2/G$ as well as, by assumption, on $\mathbb{H}^2/G_1,$ and thus induces an automorphism $\hat{\alpha}$ of $G$ that stabilizes $G_1$ in $G$, which proves one direction.

For the other direction, first consider the situation for the at most index $2$ subgroup $\tilde{N}\subset G$ that contains only orientation preserving elements. We further pass to a finite index normal subgroup $N\subset \tilde{N}$ of $G$ without torsion elements, which we can take to be the fundamental group of a possibly punctured and bordered orientable surface~\cite{fox52,fconjecture,Chau}.

Now let $\alpha$ be induced by a homeomorphism $h$ of $\mathbb{H}^2$ such that $\alpha (n)=h\circ n\circ h^{-1}\; \forall n\in N,$ which w.l.o.g. can be chosen to be the uniquely determined extremal quasi-conformal mapping of $\mathbb{H}^2$ satisfying this relation. 

Now define for arbitrary $g\in G$
\begin{align*}
\varphi=\hat{\alpha}(g)hg^{-1}.
\end{align*}
For $n\in N$ we obtain
\begin{align*}
\varphi(n)&=\hat{\alpha}(g)hg^{-1}(n)=\hat{\alpha}(g)h(g^{-1}ng)g^{-1}=\hat{\alpha}(g)h(g^{-1}ng)g^{-1}\\
&=\hat{\alpha}(g)\alpha(g^{-1}ng)hg^{-1}=\alpha(n)\hat{\alpha}(g)hg^{-1}=\alpha(n)\varphi.
\end{align*}
The fourth equality uses $g^{-1}ng\in N$, since $N$ is normal. Now, $\alpha(n)$ and $n$ act as isometries on $\mathbb{H}^2$, hence leave the dilatation of $\varphi$ invariant, so by the uniqueness of extremal maps we obtain $\varphi=h$ and thus $\hat{\alpha}(g)=h(g)h^{-1}.$ Since $g$ was arbitrary, $h$ preserves $G$-fibers, which proves the theorem.
\end{proof}
The next is a generalization of \cite[Lemma 11]{Maclachlan1975}.
\begin{lemma}\label{lem:uniqueext}
If some automorphism $\alpha$ of the hyperbolic orbifold group $G$ induces an automorphism $\alpha|_H$ of a finite index subgroup $H\subset G$, then there is only one extension of $\alpha|_H$ to $G$, i.e. if $\alpha|_{H}=\id_{H}$ then $\alpha=\id_{G}.$ 
\end{lemma}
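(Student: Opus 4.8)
The plan is to reduce the statement to its contrapositive form already spelled out in the lemma: since two extensions of $\alpha|_N$ differ by an automorphism of $G$ restricting to the identity on $N$, it suffices to show that $\alpha|_N=\id_N$ forces $\alpha=\id_G$. First I would make $\alpha$ geometric. By Theorem \ref{thm:mcgout} a type-preserving automorphism of $G$ is realised by a homeomorphism $h$ of $\mathbb{H}^2$ with $\alpha(\gamma)=h\gamma h^{-1}$ for all $\gamma\in G$, which I may take to be the extremal quasi-conformal map used in the proof of Theorem \ref{thm:submcgs}; being quasi-conformal, $h$ extends to a boundary homeomorphism $\hat h$ of $\partial\mathbb{H}^2$ intertwining the boundary actions, i.e. $\hat h\circ\gamma=\alpha(\gamma)\circ\hat h$ on $\partial\mathbb{H}^2$. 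The hypothesis $\alpha|_N=\id_N$ then says precisely that $\hat h$ commutes on the boundary with every $n\in N$.

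Next I would exploit that $N$ is \emph{noncyclic}, hence non-elementary, so its limit set $\Lambda(N)\subset\partial\mathbb{H}^2$ is infinite (in fact perfect) and the attracting/repelling fixed points of the hyperbolic elements of $N$ are dense in $\Lambda(N)$. For such an $n$ the commutation $\hat h n\hat h^{-1}=n$ forces $\hat h$ to preserve the (ordered) pair of fixed points of $n$, and since $\hat h$ conjugates $n$ to $n$ and not to $n^{-1}$ it fixes each of them individually; by density and continuity $\hat h$ fixes $\Lambda(N)$ pointwise. To pass from this to $\alpha=\id_G$ I would reduce to the case that $N$ is normal in $G$ by replacing $N$ with its normal core (which, for the finite-index subgroups $N$ relevant to Theorem \ref{thm:submcgs}, is again a noncyclic, indeed lattice, subgroup, and still satisfies $\alpha|_{\,\cdot\,}=\id$). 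With $N$ normal, $\Lambda(N)$ is $G$-invariant, so for any $\gamma\in G$ and $p\in\Lambda(N)$ one gets $\alpha(\gamma)\,p=\hat h\gamma\hat h^{-1}p=\gamma p$, using that $\hat h$ fixes both $p$ and $\gamma p$. Since $\Lambda(N)$ contains at least three points and an isometry of $\mathbb{H}^2$ is determined by its action on three boundary points, $\alpha(\gamma)=\gamma$ for all $\gamma$, whence $\alpha=\id_G$.

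The hard part is the passage from ``$\hat h$ fixes $\Lambda(N)$ pointwise'' to control of $\alpha$ on all of $G$: a single cyclic subgroup pins down only two boundary points, which is insufficient to determine an isometry, so the noncyclicity of $N$ (giving an infinite limit set with at least three points) is exactly what the argument cannot do without, and it is also where one must arrange that $\Lambda(N)$ is large and $G$-invariant, forcing the reduction to a normal (finite-index) subgroup. An alternative route, closer in spirit to Maclachlan's Lemma~$11$ and to Theorem \ref{thm:submcgs}, is to argue directly from uniqueness of the extremal quasi-conformal map: the identity realises $\alpha|_N=\id_N$ extremally, and a transport computation of the form $\varphi=\hat\alpha(g)\,h\,g^{-1}$ shows $\varphi$ realises $\id_N$ with the same dilatation as $h$, so $\varphi=h$ by uniqueness and hence $\alpha(g)=g$; here too the non-normal case is handled by descending to a normal noncyclic subgroup, which I expect to be the one genuinely delicate point.
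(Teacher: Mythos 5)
Your main (first) route is correct where it applies, but it is genuinely different from, and strictly weaker than, the paper's argument, which is purely algebraic and uses none of the boundary dynamics you invoke. The paper first reduces to $N$ normal and then, for $g\in G$ and $n\in N$, writes $gng^{-1}=\alpha(gng^{-1})=\alpha(g)\,n\,\alpha(g)^{-1}$, so that $g^{-1}\alpha(g)$ centralizes $N$; since a nontrivial element of a hyperbolic orbifold group commutes only with elements of a cyclic subgroup containing it \cite{Friedrich2008}, the centralizer of a noncyclic subgroup is trivial and $\alpha(g)=g$. Arbitrary subgroups are then handled by passing to a characteristic subgroup of $G$. Your computation $\alpha(\gamma)p=\hat h\gamma\hat h^{-1}p=\gamma p$ on $\Lambda(N)$, followed by ``an isometry fixing three boundary points is trivial,'' is a faithful geometric shadow of exactly this centralizer statement, so the two proofs buy the same conclusion; but the algebraic version is shorter and needs no realization of $\alpha$ by a map of $\mathbb{H}^2$.

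That last point is where your proof has genuine gaps relative to the statement. First, you must assume $\alpha$ is geometric (type preserving) in order to get the boundary homeomorphism $\hat h$ at all; the lemma is stated for an arbitrary automorphism, and the paper's remark after the proof (that the lemma ``remains valid within the class of geometrical automorphisms'') indicates the geometric case is the special one, not the general one. Second, ``noncyclic'' does not imply ``non-elementary'' for NEC groups: an infinite dihedral subgroup (e.g.\ generated by two distinct involutions) is noncyclic but has a two-point limit set, so your ``at least three points'' step fails there, while the centralizer argument still goes through; and your normal-core reduction further restricts to finite-index $N$. These restrictions are harmless for the applications in Theorem \ref{thm:submcgs} and Proposition \ref{prop:BHgen}, where $N$ has finite index and the automorphisms are geometric, but they mean you have not proved the lemma as stated. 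Finally, your ``alternative route'' via extremal quasi-conformal maps is not in working order: $h$ extremal for the $G$-conjugation problem need not be extremal within its $N$-fiber-isotopy class, so uniqueness does not let you conclude $h=\id$ without essentially assuming Proposition \ref{prop:BHgen}, which is downstream of this lemma.
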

\begin{proof}
Let $g\in G$. As a finite index subgroup of $G$, $H$ contains a finite index subgroup $N$ that is normal in $G.$ Then for any $n\in N$ we have $gng^{-1}=\alpha(gng^{-1})=\alpha(g)n\alpha(g)^{-1}, $ i.e. $g^{-1}\alpha(g)$ commutes with every element of $N$. Since $N$ is a hyperbolic orbifold group itself, we have $g=\alpha(g),$ because a nontrivial element in $G$ commutes only with elements of a cyclic subgroup it is part of~\cite{Friedrich2008}. Since $g$ was arbitrary, $\alpha=\id_G$.    
\end{proof}

We now give a short proof of the Birman-Hilden property for general orbifold groups, which is somewhat different than that in \cite{Zieschang1973}.
\begin{proposition}\label{prop:BHgen}
Let $S\subset G$ be a finite index subgroup of the hyperbolic orbifold group $G$.
If a $G$ fiber-preserving homeomorphism $\varphi$ of $\mathbb{H}^2$ is $S$-fiber isotopic
to the identity, then $\varphi$ is $G$-fiber isotopic to the identity.
\end{proposition}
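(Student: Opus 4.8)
The plan is to pass from the homeomorphism $\varphi$ to the automorphism it induces on $G$, pin that automorphism down using the isotopy to the identity on the $S$-level together with the uniqueness-of-extension result, and finally lift the resulting isotopy on $\mathcal{O}$ back up to $\mathbb{H}^2$. First I would observe that, since $\varphi$ is $G$-fiber-preserving, the assignment $\alpha(\gamma):=\varphi\gamma\varphi^{-1}$ defines a (geometrical, in the sense of Definition~\ref{def:geomauto}) automorphism of $G$, and that because $\varphi$ is in addition $S$-fiber-preserving we have $\alpha(S)=S$, so $\alpha|_S$ is exactly the automorphism of $S$ induced by $\varphi$.

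The first substantive step is to show $\alpha|_S=\id_S$. Let $\varphi_t$ be an $S$-fiber isotopy with $\varphi_0=\varphi$ and $\varphi_1=\id$. For a fixed $s\in S$, each $\varphi_t s\varphi_t^{-1}$ lies in $S$ (every $\varphi_t$ normalizes $S$ as a group of deck transformations), and $t\mapsto\varphi_t s\varphi_t^{-1}$ is continuous into $S$, which is a discrete subset of $\Iso(\mathbb{H}^2)$; hence the path is constant. Evaluating at $t=1$ gives $\varphi s\varphi^{-1}=s$, so $\alpha|_S=\id_S$. This discreteness-continuity argument is what lets me avoid any inner-automorphism ambiguity at this stage.

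Next, since $S$ is noncyclic of finite index, Lemma~\ref{lem:uniqueext} upgrades $\alpha|_S=\id_S$ to $\alpha=\id_G$. Consequently $\varphi$ descends to a homeomorphism $\bar\varphi$ of the underlying space of $\mathcal{O}$; because $\alpha$ fixes the conjugacy classes of all elliptic, mirror, boundary-hyperbolic, and parabolic elements, $\bar\varphi$ preserves all the features and so lies in $\Hom(\mathcal{O})$, with $\varphi$ as a lift inducing the identity automorphism of $\pi_1(\mathcal{O})=G$. Theorem~\ref{thm:idisot} then produces an isotopy of $\bar\varphi$ to the identity in $O_0$.

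The final step, and the one I expect to be the main obstacle, is to lift this isotopy back through $G$-fiber-preserving homeomorphisms. Using that $\mathbb{H}^2\to\mathcal{O}$ is a finite-sheeted regular branched cover (so the Birman--Hilden property holds) together with unique path-lifting, I would lift the isotopy $\bar\varphi_t$ to $G$-fiber-preserving $\varphi_t$ with $\varphi_0=\varphi$; the care needed is in justifying that the lifted isotopy stays fiber-preserving across the branch points. Then $\varphi_1$ covers $\id_\mathcal{O}$, hence $\varphi_1\in G$, and the same continuity argument forces $t\mapsto\varphi_t\gamma\varphi_t^{-1}$ to be constantly the identity, so $\varphi_1$ centralizes all of $G\supset S$. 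Since a nontrivial element of a hyperbolic orbifold group commutes only within a cyclic subgroup while $S$ is noncyclic, $\varphi_1=\id$, and therefore $\varphi$ is $G$-fiber isotopic to the identity. Eliminating this residual deck transformation $\varphi_1$ is the delicate point that the noncyclicity hypothesis on $S$ is really there to handle.
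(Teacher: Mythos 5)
Your proposal is correct and follows essentially the same route as the paper, whose entire proof is: the induced automorphism restricts to $\id_S$, Lemma~\ref{lem:uniqueext} upgrades this to $\id_G$, and Theorem~\ref{thm:mcgout} then gives the $G$-fiber isotopy to the identity. Your additional work (the discreteness/continuity argument pinning down $\alpha|_S=\id_S$ on the nose, and the explicit descent--apply-Theorem~\ref{thm:idisot}--relift--kill-the-residual-deck-transformation chain) is just a careful unpacking of what the paper compresses into the citation of Theorem~\ref{thm:mcgout}.
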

\begin{proof}
By assumption, $\varphi$ induces an automorphism $\alpha$ of $G$, which induces $\id_{S}$ on the subgroup $S$, so by lemma \ref{lem:uniqueext} $\alpha=\id_{G},$ which by theorem \ref{thm:mcgout} implies that $\varphi$ is $G$-fiber isotopic to the identity.
\end{proof}
An important technical consequence of the Birman-Hilden property for isotopic tiling theory is that two sufficiently complicated tilings that arise from decorations w.r.t. a non-conjugate pair of sets of generators for the symmetry group $\Gamma$ with orbifold $\mathcal{O}$ are never isotopic in $S,$ where $S$ is any hyperbolic surface with symmetry group $\Gamma$. Indeed, two sufficiently complicated decorations from a non-conjugate pair of sets of generators for $\Gamma$ are isotopically distinct in $\mathbb{H}^2/\Gamma$, so by theorem \ref{thm:mcgout} the map in $\Mod(\mathcal{O})$ exchanging these decorations must be non-trivial. Now, by proposition \ref{prop:BHgen} above, as tilings of the Riemann surface $S$ that finitely covers $\mathbb{H}^2/\Gamma$, the two tilings, even if they are related by a homeomorphism of $S$, cannot be related by an isotopy in $S$. 

Let $\mathcal{O}_1\to \mathcal{O}_2$ be a finite covering of orbifolds, with fundamental groups $G_1$ and $G_2$ respectively, with $G_1\subset G_2$. Then a geometric automorphism of $G_1$, induced by a map $f$ such that $fG_1f^{-1}=G_1$, is extendible to a geometric automorphism of $G_2$, i.e. a homeomorphism of $\mathcal{O}_2$, iff $f$ satisfies $fG_2f^{-1}=G_2.$ Consider the subgroup $\mathcal{L}\subset \Hom(\mathcal{O}_2)$ of homeomorphisms of $\mathcal{O}_2$ that lift to homeomorphisms on $\mathcal{O}_1$ and set $A:=\Hom(\mathcal{O}_2)/\mathcal{L}.$ Two elements $f\mathcal{L},g\mathcal{L}\in A$ are equal iff $fg^{-1}\in\mathcal{L}.$ This implies that the induced automorphisms $A_f,$ $A_g$ of $G_2$ satisfy $A_f(G_1)=A_g(G_1).$ Said in another way, there are as many equivalence classes in $A$ as there are isomorphic versions of $G_1$ in $G_2$ that get exchanged by automorphisms of $G_2$. Now, $A_f(G_1)$ has the same index in $G_2$ for all $f$. Since $G_2$ is finitely generated, there are only a finite number of subgroups in $G_2$ of a given index, so we obtain the following.
\begin{proposition}\label{prop:finitelifts}
Let $\mathcal{O}_1\to \mathcal{O}_2$ be a finite covering of orbifolds and denote by $\mathcal{L}$ the subgroup of homeomorphisms in $\Hom(\mathcal{O}_2)$ that lift to homeomorphisms of $\Hom(\mathcal{O}_1).$ Then there are only finitely many homeomorphism classes in $\Hom(\mathcal{O}_2)/\mathcal{L}.$ In particular, there are only finitely many topologically distinct symmetric graph embeddings into a surface. 
\end{proposition}
Since there is only a finite number of groups that are possible symmetry groups of a given hyperbolic surface, proposition \ref{prop:finitelifts} also proves that there are only finitely many topologically distinct ways of symmetrically embedding any graph on a hyperbolic surface, which is a well known statement.    

The contents of this section open up possible investigations into more refined questions relating to isotopic tiling theory on a hyperbolic Riemannian surface $S$. For example, by the results of this section, in particular theorem \ref{thm:submcgs} and lemma \ref{lem:uniqueext}, we see that elements of the MCG that are supported in a particular subsurface give rise to automorphisms that leave invariant a subset of the generators. It is well-known that the MCG of any surface has generators that are supported in subsurfaces~\cite{GERVAIS2001}. 
 
The results furthermore add to the duality of the description of the MCG as a group of geometric transformations and as a group of algebraic transformations. In particular, the following important related questions can be examined from algebraic or geometric points of view.
\begin{itemize}
\item Which isotopically distinct tilings with the same symmetry group $G$ are related by a homeomorphism of $S$? 
\item How does an element of the MCG of an orbifold relate to the MCG of a covering orbifold?
\end{itemize}
Note that in most cases these questions do not have a generic answer and depend on the set up, i.e. the conformal structure\footnote{Note that we explicitly allow local coordinate changes to be antiholomorphic.} on $S$ and the tiling. 
\section{Finite subgroups of the MCG}\label{sec:finiteorders}
In this section, we prove results on finite subgroups of the MCG of an orbifold. In particular, we present a proof of the Nielsen realization problem for orbifolds. Theorem \ref{thm:mcgout} implies that the MCG essentially does not depend on the orders of the torsion elements of the orbifold group and as a result, abstract results on MCGs are sometimes useful for applications. Note that in our definition of the MCG, where homeomorphisms are allowed to change the boundary, surfaces with boundary do not necessarily have torsion free MCGs, in contrast to the classical situation~\cite[Corollary $7.3$]{primermcgs}.

An important technical aspect of the EPINET enumerative project is that some isotopically distinct tilings of the embedded hyperbolic surface $S$ in question are related by finite order isometries of the surface that lift to symmetries of $\mathbb{R}^3$. When producing nets in $\mathbb{R}^3$, one often only wants one representative of these. A realization of $\mathcal{O}$ as a quotient space of $\mathbb{H}^2$ with symmetry group $G:=\pi_1(\mathcal{O})\subset \Iso(\mathbb{H}^2)$ induces a metric and conformal structure on $\mathcal{O}$. Before discussing the most general results, we will first explain the set-up and problem.
  
A representative of $\hat{f} \in \Mod(\mathcal{O})$ lifts to a transformation $f$ on $\mathbb{H}^2$.
Suppose $f$ acts as an isometry of $\mathbb{H}^2$. Clearly, $f$ satisfies $f\tilde{g}f^{-1}\in G,$ $\forall \tilde{g}\in G$ so, since $f$ is an isometry, $f\in \mathcal{N}(G),$ the normalizer of $G$ in $\Iso(\mathbb{H}^2)$. Now, suppose that $f\in \Iso(\mathbb{H}^2)$ acts trivially by conjugation on $G$. Then, it would have to fix all of the fixed points on the unit circle at infinity of the hyperbolic translations in $G$. Every hyperbolic orbifold sits inside a surface with at least two independent translations and therefore, $h$ fixes at least $4$ points on the unit circle, hence must be the identity. Therefore, we see that $\mathcal{N}(G)$ injects into $\Aut(G),$ where $G$ itself acts as inner automorphisms of $G$. Now, $fGf^{-1}\subset G$ implies that $f$ preserves $G$ orbits and therefore $f$ induces an (anti-)conformal automorphism of the quotient space $\mathbb{H}^2/G$. 
On the other hand, every conformal automorphism of $\mathbb{H}^2/G$ lifts to an isometry of $\mathbb{H}^2$ by the definition of the conformal structure on $\mathbb{H}^2/G$. Clearly, $G$ itself acts trivially on the space of its orbits. Therefore, we actually get an isomorphism of groups $\Iso(\mathbb{H}^2/G)\equiv\mathcal{N}(G)/G$. In particular, we see that the normalizer $\mathcal{N}(G)$ is discrete, since $\Iso(\mathbb{H}^2/G)$ is. Moreover, $f$ can be interpreted as an element of a hyperbolic supergroup of $G.$ Note, however, that different conformal structures of $\mathbb{H}^2/G$ can give rise to different towers of supergroups of $G$. The condition that $f$ acts as an isometry of $\mathbb{H}^2$ depends solely on the conformal structure induced by the realization of $\mathcal{O}$. 

Assume we are given a finite subgroup $H\subset\Mod(\mathcal{O})$. A natural question is whether or not there exists a finite group $\tilde{H}\subset \Hom(\mathcal{O})$ so that the natural projection $ \Hom(\mathcal{O})\to\Mod(\mathcal{O})$ restricts to an isomorphism $\tilde{H}\to H$. This is known as the \emph{Nielsen realization problem} in the case where $\mathcal{O}$ is a classical surface. It turns out that the proofs of many special cases of the Nielsen realization problem generalize directly to our more general setting. For example, \cite[Theorem $9$]{Maclachlan1975} establishes the positive resolution of the problem for finite solvable subgroups of orientable MCGs without mirrors, based entirely on the proof of the classical theorem. 

The idea of the proof is to find a point in Teichm\" uller space that is fixed by the induced action of the periodic MCG element to show the existence of a metric on $\mathcal{O}$ such that $f$ acts as an isometry, in which case its lift to $\mathbb{H}^2$ does too. To make sense of this idea, one has to introduce a version of Teichm\" uller space to orbifolds in terms of complex structures, see~\cite{thurston,orbteich}. The proof actually shows more, namely that for a finite order element $f$ of the MCG, there is a conformal structure on $\mathcal{O}$, or realization in $\mathbb{H}^2$, such that $f$ acts as a conformal map and therefore as an isometry of $\mathbb{H}^2$ after lifting. 

With the above in mind, we turn to the general Nielsen realization problem for a given finite subgroup of $\Mod(\mathcal{O})$. The positive resolution of the Nielsen realization problem for classical surfaces~\cite{Kerckhoff1983} includes a sketch of the proof for orientable orbifolds and also a sketch for how to generalize the results from orientable surface to nonorientable ones, but it is unclear how to interpret it because the paper does not include a definition of the MCG of a general orbifold. We will present a different approach here that naturally follows from of the results of this paper and gives a complete proof of the Nielsen realization problem for our general hyperbolic orbifold groups. 

Recall first that a characteristic subgroup $C$ of $G$ is a subgroup that is invariant under all automorphisms of $G$. This means that $\varphi(C)\subset C$ $\forall \varphi\in\Aut(G)$ and thus also $\varphi^{-1}(C)\subset C,$ i.e. $C\subset \varphi(C)$ so that any $\varphi\in\Aut(G)$ induces an element of $\Aut(C).$ It is well-known that every finite index subgroup of a finitely presented group contains a finite index subgroup that is characteristic.

\begin{theorem}\label{thm:nielsenreal}
For a given finite subgroup $H\subset\Mod(\mathcal{O})$ of a hyperbolic orbifold $\mathcal{O}$, there is a hyperbolic metric on $\mathcal{O}$ s.t. $H$ acts as isometries.
\end{theorem}
\begin{proof}
There is a finite covering of $\mathcal{O}$ by a classical surface $\tilde{S}$~\cite{fox52,fconjecture,Chau}. We further pass over to a finite index subgroup $G$ of $\pi_1(\tilde{S})$ that is characteristic in $\pi_1(\mathcal{O})$. Then, since $G$ does not contain torsion elements, it corresponds to the fundamental group of a classical, possibly punctured surface $S$. By lemma \ref{lem:uniqueext}, we have an injective morphism $\iota$ from $\Aut(\pi_1(\mathcal{O}))$ to $\Aut(\pi_1(S))$. By theorem \ref{thm:submcgs}, a geometric automorphism $\alpha$ of $\pi_1(S)$ is induced by a $\pi_1(\mathcal{O})$ fiber-preserving homeomorphism of $\mathbb{H}^2$ if and only if $\alpha$ is induced by an automorphism $\hat{\alpha}$ of $\pi_1(\mathcal{O}),$ so, by corollary \ref{cor:geoautsubgr}, the image of a geometric automorphism under $\iota$ is induced by a $\pi_1(\mathcal{O})$ fiber-preserving homeomorphism of $\mathbb{H}^2$. Now, given a path of $\pi_1(\mathcal{O})$ fiber-preserving homeomorphisms of $\mathbb{H}^2$ to the identity, each homeomorphism in the path induces the identity automorphism on $\pi_1(\mathcal{O})$ by theorem \ref{thm:idisot} and the injectivity part of theorem \ref{thm:mcgout}, and thus defines a homeomorphism of $S$.  Therefore, $\iota$ induces a well-defined morphism $\iota:\Mod(\mathcal{O})\to\Mod(S).$ By the Birman-Hilden property in proposition \ref{prop:BHgen}, $\iota$ is again injective and we can interpret $\Mod(\mathcal{O})$ and therefore $H$ as subgroups of $\Mod(S)$. In particular, every element of $H$ corresponds to a $\pi_1(\mathcal{O})$ and $\pi_1(S)$ fiber-preserving homeomorphism. Therefore, the subgroup $\tilde{G}$ of $\Mod(S)$ generated by $H$ and the group of deck transformation of the covering $S\to\mathcal{O}$ is finite. The resolution of the classical Nielsen realization problem~\cite{Kerckhoff1983} yields that $\tilde{G}\subset \Mod(S)$ is realized as a group of isometries of some hyperbolic metric on $S$. By construction, this hyperbolic metric on $S$ induces one on $\mathcal{O}$ that is invariant under $H$.
\end{proof}
We can also express theorem \ref{thm:nielsenreal} as follows. For a finite subgroup $H$ of mapping classes of an orbifold $\mathcal{O}$, there exists a hyperbolic metric $g$ on $\mathcal{O}$ induced by some realization of $\pi_1(\mathcal{O})$ as a group of isometries in $\Iso(\mathbb{H}^2)$ such that there is a subgroup $\tilde{H}\subset \Hom(\mathcal{O})$ so that the projection $\tilde{H} \to H$ induced by $\Hom(\mathcal{O})\to\Mod(\mathcal{O})$ is an isomorphism. Moreover, $\tilde{H}$ acts as isometries on $\mathcal{O}$ with metric $g$. In particular, the elements of $H$ can be interpreted as elements of a discrete supergroup of $\pi_1(\mathcal{O})$ in $\Iso(\mathbb{H}^2).$ 

The above discussion is important for $3$D net enumeration, since the TPMS that account for physically interesting tilings have finite groups of symmetries that are induced by orientable ambient isometries in $\mathbb{R}^3$ and therefore yield the same $3$D structures. In such applications, the finite order elements of the MCG are the candidates for transformations one might be interested in disregarding. 

\begin{proposition}\label{prop:finitestabs}
The subgroup of $\Mod(\mathcal{O})$ that leaves invariant the isotopy class of an equivariant tiling $(\mathcal{T},\pi_1(\mathcal{O}))$ is finite.
\end{proposition} 
\begin{proof}
This is essentially an extension of the well-known Alexander's trick~\cite[Lemma $2.1$]{primermcgs}, by which a homeomorphism of the closed disk that is isotopic to the identity on the boundary is isotopic to the identity. 

Now, an equivariant tiling of $\mathbb{H}^2$ corresponds to a $2$-cell embedding of a graph on $M$, i.e. an embedding of a graph $G$ into $M$ for which $M-G$ consists entirely of components that are homeomorphic to an open disk, where $M$ is any surface covered by $\mathbb{H}^2$ . Assume there was a tiling by disks with edge graph $\tilde{G}$ in $\tilde{M}$ that gives rise to a graph $G$ in $M$ such that $M-G$ had a component $C$ that is not a disk. Let $c$ be a noncontractible loop in $C$. Now, $c$ cannot be contractible in $M$ because $G$ in $M$ is connected. Thus, $c$ is noncontractible in $M$ and corresponds to a deck transformation which is a translation. Denote by $\tilde{c}$ a maximal lift of $c$ in the universal cover $\tilde{M}$. Now, $\tilde{c}$ cuts $\mathbb{H}^2$ into two pieces and is entirely contained in a component of $\mathbb{H}^2-\tilde{G}$, where $\tilde{G}$ is the lifted graph of $G$, which contradicts the assumption on the tiling in $\mathbb{H}^2.$

Therefore, a homeomorphism that leaves invariant $\mathcal{T}$ up to isotopies can be interpreted as a graph isomorphism of the decoration $T$ on $\mathcal{O}$, where every smallest cycle in $T$ bounds a disk in the underlying topological space $O$ of $\mathcal{O}$. Now, if a homeomorphism $f$ leaves invariant all edges of $T$ as sets and does not change their orientations, then by the above Alexander's trick, $f$ is isotopically trivial in every disk bounded by tile edges, and therefore, everywhere on $O$.
Note that by construction the isotopies within every tile leave every edge invariant as a set and we easily see that the isotopies on the boundaries can be chosen inductively for compatibility. 

Therefore, the subgroup of all elements in $\Mod(\mathcal{O})$ that leave $\mathcal{T}$ invariant can be interpreted as a group of graph automorphisms of the graph in $O$ that gives rise to the tiling.
\end{proof}

By theorem \ref{thm:nielsenreal} above, every finite subgroup of a MCG $\Mod(\mathcal{O})$ has an interpretation as a supergroup of $\pi_1(\mathcal{O})$ and therefore leaves some tiling invariant. Conversely, if a tiling $\mathcal{T}$ is invariant under a group $G_\mathcal{T}$ of MCG elements, then $G_\mathcal{T}$ is finite and there is a realization of the equivariant equivalence class of $\mathcal{T}$ that is also invariant under the supergroup containing the elements from $G_\mathcal{T}$.

As an example, consider the finite orders of elements of the orientable MCG $M_n$ of an $n$-times punctured sphere, which were studied in \cite{Gillette1967} and \cite{Murasugi1982}. The result is that $m$ is the order of an element in the MCG if and only if $m$ divides $n$, $n-1$, or $n-2$. There exists an intricate connection between the MCG $M_n$ and the braid group that shows that $M_n$ can be generated by the standard generators of the braid group~\cite{primermcgs}. Using these generators $\{ \sigma_i\}_{i=1}^{n-1}$, any element of finite order in $M_n$ is conjugate to one of the following~\cite[Theorem $4.4$]{Murasugi1982}:
\begin{align}\label{eq:mcgspherefiniteorder}
(\sigma_1\sigma_2\cdots \sigma_{n-1})^k,\quad(\sigma_1\sigma_2\cdots \sigma_{n-1}\sigma_1)^k, \text{ or } (\sigma_1\sigma_2\cdots \sigma_{n-2}\sigma_1)^k
\end{align}
for some integer $k.$ In fact, the expressions in \eqref{eq:mcgspherefiniteorder} yield finite order elements for arbitrary $k$~\cite[Comments following theorem $4.2$]{Murasugi1982}. For $k=1$, these elements have orders $n,$ $n-1,$ and $n-2$, respectively.

Using these observations, one can identify the possible additional symmetry groups of tilings with rotational symmetries as subgroups of the MCG.

\section{Summary and Implications for Applications}\label{sec:results}
We have developed a classification of all isotopically distinct equivariant tilings of a hyperbolic surface $S$ of finite genus, possibly nonorientable, with boundary, and punctured. To analyse the situation for a given hyperbolic surface $S$ we take the following steps. First, we find the smallest (in terms of area) possible symmetry group of $S$, which corresponds to a symmetry group $G_0$ of the hyperbolic fundamental polygon belonging to $S$. This smallest symmetry group $G_0$ exists as a consequence of generalizations of the classical Hurwitz theorem~\cite{oikawa}. There are finitely many possible symmetry groups $G$ for tilings such that $G_0\subset G\subset \pi_1(S)$ because orbifold groups are finitely generated. Given such a $G$, we choose a set of geometric generators. From these generators, we obtain a set of fundamental tilings with symmetry group $G$ as a decoration of the associated orbifold $\mathcal{O}.$ The decoration is specified up to isotopy by a combinatorial description from the Delaney-Dress symbol of the tiling. The mapping class group $\Mod(\mathcal{O})$ of $\mathcal{O}$ naturally acts on the set of sets of geometric generators.
 Thus, starting from the classical Delaney-Dress symbol for the fundamental tiling with the starting set of generators, one obtains all other isotopically distinct fundamental equivariant tilings with symmetry group $G$ by repeated applications of $\Mod(\mathcal{O})$. For each of the resulting fundamental tilings, we independently apply the GLUE and SPLIT operations exactly in the same way as in the classical setting to eventually produce all equivariant tilings with symmetry group $G$. 
 
One caveat here is that in some examples it is possible to find two different sets of generators that are nonconjugate but whose fixed method of producing a tiling results in the same isotopy class of tilings. In such situations, the isotopy class of tiling associated to a decoration of the orbifold is left invariant by an element of the MCG. Such a situation of ambiguity can only occur if the decoration corresponding to the tiling is too sparse to detect the changes the generators undergo. By proposition \ref{prop:finitestabs}, the set of elements of the MCG that leaves invariant an isotopy class of tilings must be finite. Another consequence of the proof of proposition \ref{prop:finitelifts} is that if we colour the edges of a classical tiling to distinguish edges, then the MCG acts with trivial stabilizers.

While $G_0$ is the smallest symmetry group commensurate with $S$, this group depends entirely on the hyperbolic finite area metric on $S$. Without reference to any specific hyperbolic structure, there are many possible chains of subgroups that yield potential symmetry groups of $S$. For example, the group $\star 2226$ appears as the smallest fundamental domain of the $H$ surface in \cite{RobinsHsurface}. However, this group does not appear at all as a symmetry group of the $P$ surface in \cite{Robins2004}. Both surfaces are of genus $3$. Also, $\star 246$ has no hyperbolic supergroups, even though $\star 237$ is smaller. 

An essential ingredient in any assignment of MCG elements to the isotopy classes of tilings comes from the fact that our MCGs have solvable word problem. This allows an unambiguous and complete enumeration of all isotopy classes of tilings with coloured edges on hyperbolic Riemann surfaces by an enumeration of MCG elements. We will leave such an enumeration, including tilings without coloured edges, and an analysis of the situation in figure \ref{fig:epinet}(c), where the tiling in $\mathbb{H}^2$ is not by closed disks, and therefore is not dealt with in classical combinatorial tiling theory, for future endeavours.

\begin{acknowledgements}
We thank Vanessa Robins, Stephen Hyde, and Stuart Ramsden of the Australian National University for detailed discussion and guidance. This research was funded by the Emmy Noether Programme of the Deutsche Forschungsgemeinschaft. B.K was supported by the Deutscher Akademischer Austauschdienst for a research stay at the Australian National University.
\end{acknowledgements}

\bibliographystyle{spmpsci}      
\bibliography{bibliography}   

\end{document}